\newtheorem {theorem}{Theorem}
\newtheorem {assumption}{Assumption}
\newtheorem {corollary}{Corollary}
\newtheorem {definition}{Definition}
\newtheorem {proposition}{Proposition}
\newenvironment {proof}[1][Proof]{\noindent \textbf {#1.} }{\ \rule {0.5em}{0.5em}}
\begin{document}
\title{Stochastic Comparative Statics in Markov Decision Processes}

\author{Bar Light\protect\footnote{Graduate School of Business, Stanford University, Stanford, CA 94305, USA. e-mail: \textsf{barl@stanford.edu}\ }  }
\maketitle

\thispagestyle {empty}

\noindent \noindent \textsc{Abstract}: 

\begin{quote}
In multi-period stochastic optimization problems, the future optimal decision is a random variable whose distribution depends on the parameters of the optimization problem. We analyze how the expected value of this random variable changes as a function of the dynamic optimization parameters in the context of Markov decision processes. We call this analysis \emph{stochastic comparative statics}. We derive both \emph{comparative statics} results and \emph{stochastic comparative statics} results showing how the current and future optimal decisions change in response to changes in the single-period payoff function, the discount factor, the initial state of the system, and the transition probability function. We apply our results to various models from the economics and operations research literature, including investment theory, dynamic pricing models, controlled random walks, and comparisons of stationary distributions.  
\end{quote}

\bigskip \noindent {Keywords: Markov decision processes, comparative statics, stochastic comparative statics.}  \\
\smallskip \noindent MSC2000 subject classification:  90C40 \\
\smallskip \noindent OR/MS subject classification:  Primary: Dynamic programming/optimal control

\newpage 

\section{Introduction}
A question of interest in a wide range of problems in economics and operations research is whether the solution to an optimization problem is monotone with respect to its parameters. The analysis of this question is called \emph{comparative statics}.\protect\footnote{
See \cite{topkis2011supermodularity} for a comprehensive treatment of comparative statics methods.
} Following Topkis' seminal work \citep{topkis1978minimizing},  comparative statics methods have received significant attention in the economics and operations research literature.\protect\footnote{
See for example \cite{licalzi1992subextremal}, \cite{milgrom1994monotone}, \cite{athey2002monotone}, \cite{echenique2002comparative}, \cite{antoniadou2007comparative}, \cite{quah2007comparative}, \cite{quah2009comparative}, \cite{shirai2013welfare}, \cite{nocetti2015robust}, \cite{wang2015precautionary}, \cite{barthel2018directional}, and \cite{koch2019index}. } While comparative statics methods are usually applied to static optimization problems, they can also be applied to dynamic optimization problems. In particular, these methods can be used to study how the policy function\footnote{\cite{muller1997does} and \cite{smith2002structural} study how the optimal value function changes with respect to the parameters of the dynamic optimization problem, such as the single-period payoff function and the transition probability function. In contrast, in this paper, we analyze the optimal policy function.} changes with respect to the current state of the system or with respect to other parameters of the dynamic optimization problem.\protect\footnote{
For comparative statics results in dynamic optimization models see \cite{serfozo1976monotone}, \cite{lovejoy1987ordered}, \cite{amir1991one}, \cite{hopenhayn1992stochastic},  \cite{mirman2008qualitative},  \cite{topkis2011supermodularity}, \cite{krishnamurthy2016partially},
\cite{smith2017risk},
\cite{lehrer2018effect}, and
\cite{l2017supermodular}.} That is, for multi-period optimization models, comparative statics methods can be used to determine how the current period's optimal decision changes with respect to the parameters of the optimization problem. For example, in a Markov decision process, under suitable conditions on the payoff function and on the transition function, comparative statics methods can be applied to show that the optimal decision is increasing in the discount factor when the state of the system is fixed. But since the model is dynamic and includes uncertainty, the states' evolution is different under different discount factors, and thus, it is not clear whether the future optimal decision is increasing in the discount factor even when the current optimal decision is increasing in the discount factor for a fixed state. 

The state of the system in period $t >1$ is a random variable from the point of view of period $1$, and thus, the optimal decision in period $t$, which depends on the state of the system in period $t$, is a random variable given the information available in period $1$. In this paper, we analyze how the expected value of the optimal decision in period $t$ changes as a function of the optimization problem parameters in the context of Markov decision processes (MDP). We call this analysis \emph{stochastic comparative statics.} More precisely, let $(E,\succeq )$ be a partially ordered set that contains some parameters of the MDP. For example, $E$ can be the set of all transition probability functions, the set of all discount factors, and/or a set of parameters that influence the payoff function. Suppose that under the parameters $e \in E$ a stationary policy function is given by $g(s,e)$ where $s$ is the state of the system. Given the policy function $g$ and the system's initial state, the system's states follow a stochastic process. Suppose that the states' distribution in period $t$ is  described by the probability measure $\mu ^{t}(ds,e)$. We are interested in finding conditions that ensure that the expected decision in period $t$, $\mathbb{E}_{\,}^{t}(g(e)) =\int g(s ,e)\mu _{\,}^{t}(ds ,e)$ is increasing in the parameters $e$ on $E$. 

The expected value $\mathbb{E}_{\,}^{t}(g(e))$ is interpreted in two different ways. From a probabilistic point of view, $\mathbb{E}_{\,}^{t}(g(e))$ is the expected optimal decision in period $t$ as a function of the parameters $e$. For example, in investment theory, this expected value usually represents the expected capital accumulation in the system in period $t$  \citep{stokey1989}. In inventory management, it represents the expected inventory in period $t$ \citep{krishnan2010inventory}, and in income fluctuation problems it represents the expected wealth accumulation (see \cite{huggett2004precautionary} and \cite{bommier2018risk}) in period $t$. From a deterministic point of view, if we consider a population of ex-ante identical agents whose states evolve independently according to the stochastic process that governs the states' dynamics, then $\mu ^{t}$ represents the empirical distribution of states in period $t$. In this case, $\mathbb{E}_{\,}^{t}(g(e))$ corresponds to the average decision in period $t$ of this population given the parameters $e$. This latter interpretation is common in the growing literature on stationary equilibrium models and mean field equilibrium models. In this literature, while the focus is on the analysis of equilibrium, some stochastic comparative statics results have been obtained (see \cite{adlakha2013mean} and \cite{acemoglu2015robust}). These stochastic comparative statics results are useful in analyzing the equilibrium of these models. In particular, proving comparative statics results and establishing the uniqueness of an equilibrium (see \cite{hopenhayn1992entry}, \cite{light2017uniqueness},  \cite{acemoglu2018equilibrium}, and \cite{light2018mean}).

The goal of this paper is to provide general stochastic comparative statics results in the context of an MDP. In particular, we provide various sufficient conditions on the primitives of MDPs that guarantee stochastic comparative statics results with respect to important parameters of MDPs, such as the discount factor, the single-period payoff function, and the transition probability function. We also provide novel comparative statics results with respect to these parameters. For example, we show that under a standard set of conditions that implies that the policy function is increasing in the state, the policy function is increasing the discount factor also (see Section \ref{Section: discount}). We apply our results in capital accumulation models with adjustment costs \citep{hopenhayn1992stochastic}, in dynamic pricing models with reference effects \citep{popescu2007dynamic}, and in controlled random walks. As an example, consider the following controlled random walk $s_{t+1} =s_{t} +a_{t} +\epsilon _{t+1}$ where $s_{t}$ is the state of the system in period $t$, $a_{t}$ is the action chosen in period $t$, and $\{\epsilon _{t}\}_{t =1}^{\infty}$ are random variables that are independent and identically distributed across time. In each period, a decision maker receives a reward that depends on the current state of the system and incurs a cost that depends on the action that the decision maker chooses in that period. The reward function is increasing in the state of the system and the cost function is increasing in the decision maker's action. The decision maker's goal is to maximize the expected sum of rewards. We provide sufficient conditions on the reward function and on the cost function that guarantee that the decision maker's current action and the expected future actions increase when the distribution of the random noise $\epsilon$ is higher in the sense of stochastic dominance.
Since our results are intuitive and the sufficient conditions that we provide in order to derive stochastic comparative statics results are satisfied in some dynamic programs of interest, we believe that our results hold in other applications as well.

The rest of the paper is organized as follows. Section \ref{Section MODEL} presents the dynamic optimization model. Section \ref{Section notations} presents definitions and notations that are used throughout the paper. In Section \ref{SCS} we present our main stochastic comparative statics results. In Section \ref{Section: discount} we study changes in the discount factor and in the single-period payoff function. In Section \ref{Section: transition} we study changes in the transition probability function. In Section \ref{Sectopn applications} we apply our results to various models. In Section \ref{Section final} we provide a summary, followed by an Appendix containing proofs.

\section{\label{Section MODEL}The model}
In this section we present the main components and assumptions of the model.  For concreteness, we focus on a standard discounted dynamic programming model, sometimes called a Markov decision process.\protect\footnote{
All our results can be applied to other dynamic programming models, such as positive dynamic programming and negative dynamic programming.
} For a comprehensive treatment of dynamic programming models, see \cite{feinberg2012handbook} and \cite{puterman2014markov}.

We define a discounted dynamic programming model in terms of a tuple of elements $(S ,A ,\Gamma  ,p ,r ,\beta )$. $S \subseteq \mathbb{R}^{n}$ is a Borel set called the state space. $\mathcal{B}(S)$ is the Borel $\sigma $-algebra on $S$. $A \subseteq \mathbb{R}$ is the action space.
$\Gamma$ is a measurable subset of $S \times A$. For all $s \in S$, the non-empty and measurable $s$-section $\Gamma(s)$ of $\Gamma$ is the set of  feasible actions in state $s \in S$. $p :S \times A \times \mathcal{B}(S) \rightarrow [0 ,1]$ is a transition probability function. That is, $p(s ,a , \cdot)$ is a probability measure on $S$ for each $(s ,a) \in S \times A$ and $p(\cdot, \cdot,B)$ is a  measurable function for each $B \in \mathcal{B}(S)$. $r :S \times A \rightarrow \mathbb{R}$ is a measurable single-period payoff function. $0 <\beta  <1$ is the discount factor.  


There is an infinite number of periods $t \in \mathbb{N} : =\{1 ,2 , . . .\}$. The process starts at some state $s(1) \in S$. Suppose that at time $t$ the state is $s(t)$. Based on $s(t)$, the decision maker (DM) chooses an action $a(t) \in \Gamma (s(t))$ and receives a payoff $r(s(t) ,a(t))$. The probability that the next period's state $s(t +1)$ will lie in $B \in \mathcal{B}(S)$ is given by $p(s(t) ,a(t) ,B)$.

Let $H =S \times A$ and $H^{t}:=\underbrace{H \times \ldots  \times H}_{t -1~ \mathrm{t} \mathrm{i} \mathrm{m} \mathrm{e} \mathrm{s}} \times S$. A policy $\sigma$ is a sequence $(\sigma _{1} ,\sigma _{2} , \ldots )$ of Borel measurable functions $\sigma _{t} :H^{t} \rightarrow A$ such that $\sigma _{t}(s(1) ,a(1) ,\ldots  ,s(t)) \in \Gamma (s(t))$ for all $t \in \mathbb{N}$ and all $(s(1) ,a(1) ,\ldots  ,s(t))\in H^{t}$. For each initial state $s\left (1\right )$, a policy $\sigma $ and a transition probability function $p$ induce a probability measure over the space of all infinite histories $H^{\infty }$.\protect\footnote{
The probability measure on the space of all infinite histories $H^{\infty }$ is uniquely defined by the Ionescu Tulcea theorem (for more details, see \cite{bertsekas1978stochastic} and \cite{feinberg1996measurability}).
} We denote the expectation with respect to that probability measure by $\mathbb{E}_{\sigma }$, and the associated stochastic process by $\{s(t) ,a(t)\}_{t =1}^{\infty }$. The DM's goal is to find a policy that maximizes his expected discounted payoff. When the DM follows a strategy $\sigma$ and the initial state is $s \in S$ his expected discounted payoff is given by
\begin{equation*}V_{\sigma}(s) =\mathbb{E}_{\sigma }\sum \limits_{t =1}^{\infty }\beta^{t -1}r(s(t),a(t)).
\end{equation*}Define \begin{equation*}V(s) =\sup _{\sigma }V_{\sigma }(s).
\end{equation*}
We call $V :S \rightarrow \mathbb{R}$ the value function. 

Define the operator $T:B(S)\rightarrow B(S)$ where $B(S)$ is the space of all functions $f :S \rightarrow \mathbb{R}$ by 
\begin{equation*}Tf(s) =\max _{a \in \Gamma (s)}h(s ,a ,f), 
\end{equation*}
where 
\begin{equation}h(s ,a ,f) =r(s ,a) +\beta \int _{S}f(s^{ \prime })p(s ,a ,ds^{ \prime }). \label{eq:h}
\end{equation}
Under standard assumptions on the primitives of the MDP,\footnote{The state and action spaces can be continuous or discrete. When we discuss  convex functions on $S$ we assume that $S$ is a convex set.} standard dynamic programming arguments show that the value function $V$ is the unique  function that satisfies  $TV =V$. In addition, there exists an optimal stationary policy and the optimal policies correspondence 
\begin{equation*}G(s) =\{a \in \Gamma (s) :V(s) =h(s ,a ,V)\} 
\end{equation*}
is nonempty, compact-valued and upper hemicontinuous. Define $g(s) =\max G(s)$. We call $g(s)$ the policy function. For the rest of the paper, we assume that the value function is the unique and continuous function that satisfies $TV =V$, $T^{n}f$ converges uniformly to $V$ for every $f \in B(S)$, and that the policy function exists.\footnote{These conditions are usually satisfied in applications. Conditions that ensure the existence and continuity of the value function and the existence of a stationary policy function are widely studied in the literature. See \cite{hinderer2016dynamic} for a textbook treatment. For recent results, see \cite{feinberg2016partially} and references therein.}

\subsection{\label{Section notations} Notations and definitions }
In this paper we consider a parameterized dynamic program. Let $(E , \succeq )$ be a partially ordered set that influences the DM's decisions. We denote a generic element in $E$ by $e$. Throughout the paper, we slightly abuse the notations and allow an additional argument in the functions defined above. For instance, the value function of the parameterized dynamic program $V$ is denoted by
\begin{equation*}V (s ,e) =\max_{a \in \Gamma  (s ,e)}h (s ,a ,e ,V).
\end{equation*} 
Likewise, the policy function is denoted by $g(s ,e)$; $r(s ,a ,e)$ is the single-period payoff function; and $h (s ,a ,e ,V)$ is the $h$ function associated with the dynamic program problem with parameters $e$, as defined above in Equation (\ref{eq:h}). For the rest of the paper, we let $E_{p}$ be the set of all transition functions $p:S \times A \times \mathcal{B}(S) \rightarrow [0 ,1]$.

When the DM follows the policy function $g(s)$ and the initial state is $s(1)$, the stochastic process $(s(t))$ is a Markov process. The transition function of $(s(t))$ can be described by the policy function $g$ and by the transition function $p$ as follows: For all $B \in \mathcal{B}(S)$, define $\mu ^{1}(B) =1$ if $s(1) \in B$ and $0$ otherwise, and $\mu ^{2}(B) =p(s(1) ,g(s(1)) ,B)$. $\mu ^{2}(B)$ is the probability that the second period's state $s(2)$ will lie in $B$. For $t \geq 3$, define $\mu ^{t}(B) =\int _{S}p(s ,g(s) ,B)\mu ^{t -1}(ds)$ for all $B \in \mathcal{B}(S)$. Then $\mu ^{t}(B)$ is the probability that $s(t)$ will lie in $B \in \mathcal{B}(S)$ in period $t$ when the initial state is $s(1) \in S$ and the DM follows the policy function $g$. For notational convenience, we omit the reference to the initial state. All the results in this paper hold for every initial state $s(1)\in S$. 

We write $\mu _{i}^{t}(B)$ to denote the probability that $s$ will lie in $B \in \mathcal{B}(S)$ in period $t$, when $e_{i} \in E$ are the parameters that influence the DM's decisions and the DM follows the policy function $g(s ,e_{i})$, $i =1 ,2$. For $e_{i} \in E$, define  
\begin{equation*}\mathbb{E}_{i}^{t}(g(e_{i})) =\int _{S}g(s ,e_{i})\mu _{i}^{t}(ds) .
\end{equation*}As we discussed in the introduction, $\mathbb{E}_{i}^{t}(g(e_{i}))$ can be interpreted in two ways. According to the first interpretation, the DM's optimal decision in period $t$ is a random variable from the point of view of period $1$. The expected value $\mathbb{E}_{i}^{t}(g(e_{i}))$ is the DM's expected decision in period $t$, given that the parameters that influence the DM's decisions are $e_{i} \in E$. Alternately, the expected value $\mathbb{E}_{i}^{t}(g(e_{i}))$ can be interpreted as the aggregate of the decisions of a continuum of DMs facing idiosyncratic shocks. In the latter interpretation, each DM has an individual state and $\mu ^{t}$ is the distribution of the DMs over the states in period $t$. This interpretation is often used in stationary equilibrium models and in mean field equilibrium models (see more details in Section \ref{Section 3.3}). We are interested in the following stochastic comparative statics question: is it true that $e_{2} \succeq e_{1}$ implies $\mathbb{E}_{2}^{t}(g(e_{2})) \geq \mathbb{E}_{1}^{t}(g(e_{1}))$ for all $t \in \mathbb{N}$ (and for each initial state)? We note that for $t =1$, the stochastic comparative statics question reduces to a comparative statics question: is it true that $e_{2} \succeq e_{1}$ implies $g(s ,e_{2}) \geq g(s ,e_{1})$?

We now introduce some notations and definitions that will be used in the next sections. 

For two elements $x ,y \in \mathbb{R}^{n}$ we write $x \geq y$ if $x_{i} \geq y_{i}$ for each $i =1 , . . . ,n$. We say that $f :\mathbb{R}^{n} \rightarrow \mathbb{R}$ is increasing if $x \geq y$ implies $f(x) \geq f(y)$.

Let  $D \subseteq \mathbb{R}^{S}$ where $\mathbb{R}^{S}$ is the set of all functions from $S$ to $\mathbb{R}$. When $\mu _{1}$ and $\mu _{2}$ are probability measures on $(S ,\mathcal{B}(S))$, we write $\mu _{2} \succeq _{D}\mu _{1}$ if \begin{equation*}\int _{S}f(s)\mu _{2}(ds) \geq \int _{S}f(s)\mu _{1}(ds)
\end{equation*}for all Borel measurable functions $f \in D$ such that the integrals exist.

In this paper we will focus on two important stochastic orders: the first order stochastic dominance and the convex stochastic order. When $D$ is the set of all increasing functions on $S$, we write $\mu _{2} \succeq _{st}\mu _{1}$ and say that $\mu _{2}$ first order stochastically dominates $\mu _{1}$. If $D$ is the set of all convex functions on $S$, we write $\mu _{2} \succeq _{CX}\mu _{1}$ and say that $\mu _{2}$ dominates $\mu _{1}$ in the convex stochastic order. If $D$ is the set of all increasing and convex functions on $S$, we write $\mu _{2} \succeq _{ICX}\mu _{1}$. Similarly, for $p_{1} ,p_{2} \in E_{p}$, we write $p_{2} \succeq _{D}p_{1}$ if \begin{equation*}\int _{S}f(s^{ \prime })p_{2}(s,a ,ds^{ \prime }) \geq \int _{S}f(s^{ \prime })p_{1}(s ,a ,ds^{ \prime })
\end{equation*}for all Borel measurable functions $f \in D \subseteq \mathbb{R}^{S}$ and all $(s ,a) \in S \times A$ such that the integrals exist.\protect\footnote{
In the rest of the paper, all functions are assumed to be integrable.  
} If $D$ is the set of all increasing functions, convex functions, and convex and increasing functions, we write $p_{2} \succeq _{st}p_{1}$, $p_{2} \succeq _{CX}p_{1}$, and $p_{2} \succeq _{ICX}p_{1}$, respectively. For  comprehensive coverage of stochastic orders and their applications, see \cite{muller2002comparison} and \cite{shaked2007stochastic}.

\begin{definition}
(i) We say that $p \in E_{p}$ is monotone if for every increasing function $f$ the function $\int _{S}f(s^{ \prime })p(s ,a ,ds^{ \prime })$ is increasing in $(s ,a)$.

(ii) We say that $p \in E_{p}$ is convexity-preserving if for every convex function $f$ the function $\int _{S}f(s^{ \prime })p(s ,a ,ds^{ \prime })$ is convex in $(s ,a)$. 

(iii) Define $P_{i}(s ,B) = :p_{i}(s ,g(s ,e_{i}) ,B)$. Let $D \subseteq \mathbb{R}^{S}$. We say that $P_{i}$ is $D$-preserving if $f \in D$ implies that $\int _{S}f(s^{ \prime })P_{i}(s ,ds^{ \prime }) \in D$.  If $D$ is the set of all increasing functions, convex functions, and convex and increasing functions, we say that $P_{i}$ is $I$-preserving, $CX$-preserving, and $ICX$-preserving, respectively.
\end{definition}

\section{\label{Section dynamics}Main results}

In this section we derive our main results. In Section 3.1 we provide stochastic comparative statics results. In Section 3.2 and in Section 3.3 we provide conditions on the primitives of the MDP that guarantee comparative statics and stochastic comparative statics results. 

\subsection{\label{SCS}Stochastic comparative statics}

 In this section we provide conditions that ensure stochastic comparative statics. Our approach is to find conditions that imply that the states' dynamics generated under $e_{2}$ stochastically dominate the states' dynamics generated under $e_{1}$ whenever $e_{2} \succeq e_{1}$. Theorem \ref{Theorem 1} shows that if $P_{2}$ is $D$-preserving and $P_{2}(s , \cdot ) \succeq _{D}P_{1}(s , \cdot )$ for all $s \in S$, then $\mu _{2}^{t} \succeq _{D}\mu _{1}^{t}$ for all $t \in \mathbb{N}$. A proof of Theorem \ref{Theorem 1} can be found in Chapter 5 in \cite{muller2002comparison} where the authors study stochastic comparisons of general Markov chains. For completeness, because our setting is slightly different, we provide the proof of Theorem \ref{Theorem 1} in the Appendix for completeness.\footnote{A similar result to Theorem  \ref{Theorem 1} for the case of $ \succeq _{st}$ and $ \succeq _{ICX}$ can be found in \cite{huggett2004precautionary}, \cite{adlakha2013mean}, \cite{balbus2014constructive}, and \cite{acemoglu2015robust}.}  

The focus of the rest of the paper is on finding sufficient conditions on the primitives of the MDP in order to apply Theorem \ref{Theorem 1}. Corollary \ref{Parameter} and Theorem \ref{TRANSITION} provide sufficient conditions for $P_{2}$ to be $D$-preserving and $P_{2}(s , \cdot ) \succeq _{D}P_{1}(s , \cdot )$ when $D$ is the set of increasing functions or the set of increasing and convex functions. The results in this section require conditions on the policy function and on the primitives of the MDP. In Sections \ref{Section: discount} and \ref{Section: transition}, we provide comparative statics and stochastic comparative statics results that depend only on the primitives of the model (e.g., the transition probabilities and the single-period payoff function).

\begin{theorem}
\label{Theorem 1}Let $(E , \succeq )$ be a partially ordered set and let $D \subseteq \mathbb{R}^{S}$. Let $e_{1} ,e_{2}\in E$ and suppose that $e_{2} \succeq e_{1}$. Assume that $P_{2}$ is $D$-preserving and that $P_{2}(s , \cdot ) \succeq _{D}P_{1}(s , \cdot )$ for all $s \in S$. Then $\mu _{2}^{t} \succeq _{D}\mu _{1}^{t}$ for all $t \in \mathbb{N}$.  
\end{theorem}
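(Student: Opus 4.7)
The natural approach is induction on $t$, iterating the one-step dominance using $D$-preservation to propagate the inequality through the Markov semigroup. Set $F_i(s) = \int_S f(s') P_i(s, ds')$ for $f \in D$; then for every $t \geq 1$ we have the factorization $\int_S f(s) \mu_i^{t+1}(ds) = \int_S F_i(s) \mu_i^t(ds)$. The plan is to chain two inequalities: one comparing the measures $\mu_2^t$ and $\mu_1^t$ on the common integrand $F_2$, and one comparing the integrands $F_2$ and $F_1$ under the common measure $\mu_1^t$.

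For the base case $t=1$, recall that $\mu_i^1$ is the point mass at the fixed initial state $s(1)$, so $\mu_2^1 = \mu_1^1$ and $\mu_2^1 \succeq_D \mu_1^1$ holds trivially. (One could equally well start the induction at $t=2$, where $\mu_i^2(\cdot) = P_i(s(1), \cdot)$ and the conclusion $\mu_2^2 \succeq_D \mu_1^2$ is immediate from the hypothesis $P_2(s,\cdot) \succeq_D P_1(s,\cdot)$ applied at $s = s(1)$.)

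For the inductive step, assume $\mu_2^t \succeq_D \mu_1^t$ and fix an arbitrary $f \in D$ for which the integrals exist. The key observation is that the $D$-preservation hypothesis on $P_2$ tells us $F_2 \in D$, which is exactly what is needed in order to invoke the inductive hypothesis with $F_2$ as the test function. This yields $\int F_2 \, d\mu_2^t \geq \int F_2 \, d\mu_1^t$. Next, the pointwise dominance $P_2(s,\cdot) \succeq_D P_1(s,\cdot)$ applied to $f \in D$ gives $F_2(s) \geq F_1(s)$ for every $s \in S$, and integrating both sides against the probability measure $\mu_1^t$ gives $\int F_2 \, d\mu_1^t \geq \int F_1 \, d\mu_1^t$. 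Concatenating,
\begin{equation*}
\int_S f \, d\mu_2^{t+1} \;=\; \int_S F_2 \, d\mu_2^t \;\geq\; \int_S F_2 \, d\mu_1^t \;\geq\; \int_S F_1 \, d\mu_1^t \;=\; \int_S f \, d\mu_1^{t+1},
\end{equation*}
which is the desired inequality. Since $f \in D$ was arbitrary, $\mu_2^{t+1} \succeq_D \mu_1^{t+1}$, closing the induction.

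There is no serious obstacle here; the only delicate point is bookkeeping about which measure and which kernel appear in each inequality. The $D$-preservation is assumed for $P_2$ rather than $P_1$ precisely so that the integrand used against $\mu_2^t$ (in the first inequality) is the same $F_2 \in D$ that one wishes to dominate pointwise by $F_1$ (in the second inequality); had the hypothesis been placed on $P_1$, one would run the symmetric chain $\int F_2 \, d\mu_2^t \geq \int F_1 \, d\mu_2^t \geq \int F_1 \, d\mu_1^t$ instead, so the asymmetry in the statement reflects a choice rather than a genuine restriction. I would note this in passing to make the logical structure transparent, and would be explicit that integrability of $f$, $F_1$, and $F_2$ under each relevant measure is being assumed in line with the footnote declaring that all functions considered are integrable.
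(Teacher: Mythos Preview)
Your proof is correct and follows essentially the same route as the paper: induction on $t$, with the inductive step splitting into two inequalities---first swapping $\mu_2^t$ for $\mu_1^t$ using $D$-preservation of $P_2$, then swapping $P_2$ for $P_1$ using the pointwise dominance hypothesis. The only cosmetic difference is that the paper spells out the factorization $\int f\,d\mu_i^{t+1}=\int F_i\,d\mu_i^t$ via indicator functions, whereas you state it directly.
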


In the case that $p_{2} =p_{1} =p$ and $(E , \succeq )$ is a partially ordered set that influences the agent's decisions, Theorem \ref{Theorem 1} yields a simple stochastic comparative statics result. Corollary \ref{Parameter} shows that if $g(s,e)$ is increasing in $e$, $g(s ,e_{2})$ is increasing in $s$, and $p$ is monotone, then $\mathbb{E}_{2}^{t}(g(e_{2})) \geq \mathbb{E}_{1}^{t}(g(e_{1}))$ whenever $e_{2} \succeq e_{1}$. This result is useful when $E$ is the set of all possible discount factors between $0$ and $1$, or is a set that includes parameters that influence the single-period payoff function (see Section \ref{Section: discount}).

\begin{corollary}
\label{Parameter}Let $e_{1} ,e_{2} \in E$ and suppose that $e_{2} \succeq e_{1}$. Assume that $g(s ,e)$ is increasing in $e$ for all $s \in S$, $g(s ,e_{2})$ is increasing in $s$,  $p_{1} =p_{2} =p$, and $p$ is monotone. Then 
\begin{equation*}\mathbb{E}_{2}^{t}(g(e_{2})) \geq \mathbb{E}_{1}^{t}(g(e_{1})) 
\end{equation*}for all $t \in \mathbb{N}$ and for each initial state $s(1) \in S$.
\end{corollary}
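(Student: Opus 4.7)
The plan is to invoke Theorem \ref{Theorem 1} with $D$ chosen to be the set of increasing functions on $S$, so that $\succeq_D$ becomes first order stochastic dominance $\succeq_{st}$. Once I have $\mu_2^t \succeq_{st} \mu_1^t$ for every $t$, the conclusion will follow by combining this with the pointwise monotonicity of $g$ in $e$ via a standard two-step inequality.

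First I would verify the two hypotheses of Theorem \ref{Theorem 1}. For the dominance condition $P_2(s,\cdot) \succeq_{st} P_1(s,\cdot)$, fix an increasing $f:S\to\mathbb{R}$ and any $s\in S$. Since $g(s,e)$ is increasing in $e$ and $e_2 \succeq e_1$, we have $g(s,e_2)\geq g(s,e_1)$. Because $p_1=p_2=p$ is monotone, $a\mapsto \int f(s')p(s,a,ds')$ is increasing, so
\begin{equation*}
\int_S f(s')P_2(s,ds') = \int_S f(s')p(s,g(s,e_2),ds') \geq \int_S f(s')p(s,g(s,e_1),ds') = \int_S f(s')P_1(s,ds').
\end{equation*}
For the $I$-preserving property of $P_2$, fix an increasing $f$ and observe that $\int_S f(s') p(s,a,ds')$ is increasing in $(s,a)$ by monotonicity of $p$. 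Composing with the map $s\mapsto (s,g(s,e_2))$, which is increasing in $s$ by the hypothesis that $g(\cdot,e_2)$ is increasing, shows that $s\mapsto \int_S f(s') P_2(s,ds')$ is increasing.

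With both hypotheses in hand, Theorem \ref{Theorem 1} yields $\mu_2^t \succeq_{st} \mu_1^t$ for every $t\in\mathbb{N}$. To close the argument, I would write
\begin{equation*}
\mathbb{E}_2^t(g(e_2)) = \int_S g(s,e_2)\,\mu_2^t(ds) \geq \int_S g(s,e_2)\,\mu_1^t(ds) \geq \int_S g(s,e_1)\,\mu_1^t(ds) = \mathbb{E}_1^t(g(e_1)),
\end{equation*}
where the first inequality uses $\mu_2^t \succeq_{st} \mu_1^t$ together with the fact that $g(\cdot,e_2)$ is increasing, and the second uses the pointwise bound $g(s,e_2)\geq g(s,e_1)$.

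There is no real obstacle here; the result is essentially a clean corollary of Theorem \ref{Theorem 1}. The only conceptual subtlety to keep in mind is why the monotonicity of $g$ in $s$ is required only for $e_2$ and not for $e_1$: it enters solely through the $I$-preserving check on $P_2$, whereas monotonicity of $g(\cdot,e_1)$ would only be relevant had we instead tried to propagate dominance in the reverse direction.
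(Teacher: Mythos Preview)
Your proposal is correct and follows essentially the same approach as the paper's proof: both verify that $P_2$ is $I$-preserving and that $P_2(s,\cdot)\succeq_{st}P_1(s,\cdot)$ using monotonicity of $p$ together with the monotonicity of $g$ in $s$ and in $e$, then invoke Theorem~\ref{Theorem 1} and finish with the same two-step inequality chain. The only difference is the order in which you check the two hypotheses, which is immaterial.
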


In some dynamic programs we are interested in knowing how a change in the initial state will influence the DM's decisions in future periods. Corollary \ref{Initial state} shows that a higher initial state  leads to higher expected decisions if the policy function is increasing in the state of the system and the transition probability function is monotone. The proof follows from the same arguments as those in the proof of Corollary \ref{Parameter}. Recall that we denote the initial state by $s(1)$.

\begin{corollary}
\label{Initial state} Consider two MDPs that are equivalent except for the initial states $s_{i}(1)$, $i =1 ,2$. Assume that $s_{2}(1) \geq s_{1}(1)$, $g(s)$ is increasing in $s$, and $p$ is monotone. Then  $\mathbb{E}_{2}^{t}(g(s_{2}(1))) \geq \mathbb{E}_{1}^{t}(g(s_{1}(1)))$ for all $t \in \mathbb{N}$.  
\end{corollary}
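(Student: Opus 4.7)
The plan is to mirror the argument used in Corollary \ref{Parameter}, applying Theorem \ref{Theorem 1} with $D$ equal to the set of increasing functions on $S$, so that $\succeq_D$ coincides with $\succeq_{st}$. Since the two MDPs share the common transition probability $p$ and the common policy function $g$, the two Markov kernels $P_i(s,B)=p(s,g(s),B)$ coincide---call the shared kernel $P$---so the domination hypothesis $P_2(s,\cdot)\succeq_{st}P_1(s,\cdot)$ in Theorem \ref{Theorem 1} holds trivially. The only departure from Corollary \ref{Parameter} is that the initial distributions $\mu_1^1$ and $\mu_2^1$ differ, being Dirac masses at $s_1(1)$ and $s_2(1)$ respectively, rather than coinciding.

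First I would check that $P$ is $I$-preserving. Given any increasing $f$, the monotonicity of $p$ implies that $(s,a)\mapsto\int f(s')p(s,a,ds')$ is increasing in $(s,a)$; composing with the increasing map $s\mapsto(s,g(s))$ (using that $g$ is increasing in $s$) yields that $Pf(s)=\int f(s')p(s,g(s),ds')$ is increasing in $s$. This step is straightforward.

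Next I would re-run the induction underlying Theorem \ref{Theorem 1}. The base case $\mu_2^1\succeq_{st}\mu_1^1$ is immediate from $s_2(1)\geq s_1(1)$: for any increasing $f$,
\[ \int f\,d\mu_2^1 \;=\; f(s_2(1)) \;\geq\; f(s_1(1)) \;=\; \int f\,d\mu_1^1. \]
For the inductive step, assuming $\mu_2^{t-1}\succeq_{st}\mu_1^{t-1}$, I would write
\[ \int f\,d\mu_2^t \;=\; \int Pf(s)\,\mu_2^{t-1}(ds) \;\geq\; \int Pf(s)\,\mu_1^{t-1}(ds) \;=\; \int f\,d\mu_1^t, \]
the inequality using that $Pf$ is increasing and hence admissible as a test function for $\succeq_{st}$. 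Applying the resulting $\mu_2^t\succeq_{st}\mu_1^t$ to the increasing function $f=g$ gives $\mathbb{E}_2^t(g(s_2(1)))\geq\mathbb{E}_1^t(g(s_1(1)))$.

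The main subtlety---not really an obstacle---is that the statement of Theorem \ref{Theorem 1} tacitly fixes a single common initial state, so one cannot quite invoke it as a black box when $s_1(1)\neq s_2(1)$. The fix is to redo its short inductive proof with the weaker base case $\mu_2^1\succeq_{st}\mu_1^1$ in place of the equality $\mu_1^1=\mu_2^1$, which is exactly what the initial-state ordering supplies; everything else transcribes verbatim from the proof of Corollary \ref{Parameter}.
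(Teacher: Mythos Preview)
Your proposal is correct and follows essentially the same approach as the paper, which simply states that ``the proof follows from the same arguments as those in the proof of Corollary~\ref{Parameter}.'' You have in fact been more careful than the paper by explicitly noting that Theorem~\ref{Theorem 1} is stated for a common initial state and that one must redo its short induction with the base case $\mu_2^1\succeq_{st}\mu_1^1$ supplied by $s_2(1)\geq s_1(1)$; this is exactly the right adaptation.
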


We now derive stochastic comparative statics results with respect to the transition probability function that governs the states' dynamics. Part (i) of Theorem \ref{TRANSITION} provides conditions that ensure that $p_{2} \succeq _{st}p_{1}$ implies $\mathbb{E}_{2}^{t}(g(p_{2})) \geq \mathbb{E}_{1}^{t}(g(p_{1}))$ for all $t \in \mathbb{N}$. Part (ii) provides conditions that ensure that $p_{2} \succeq _{CX}p_{1}$ implies $\mathbb{E}_{2}^{t}(g(p_{2})) \geq \mathbb{E}_{1}^{t}(g(p_{1}))$ for all $t \in \mathbb{N}$. In Section 4 we apply these results to various commonly studied dynamic optimization models.       

\begin{theorem}
\label{TRANSITION}Let $p_{1} ,p_{2} \in E_{p}$.

(i) Assume that $p_{2}$ is monotone, $g(s ,p_{2})$ is increasing in $s$, and $g(s ,p_{2}) \geq g(s ,p_{1})$ for all $s \in S$. Then $p_{2} \succeq _{st}p_{1}$ implies that $\mathbb{E}_{2}^{t}(g(p_{2})) \geq \mathbb{E}_{1}^{t}(g(p_{1}))$ for all $t \in \mathbb{N}$.

(ii) Assume that $p_{2}$ is monotone and convexity-preserving, $g(s ,p_{2})$ is increasing and convex in $s$, and $g(s ,p_{2}) \geq g(s ,p_{1})$ for all $s \in S$. Then $p_{2} \succeq _{CX}p_{1}$ implies that $\mathbb{E}_{2}^{t}(g(p_{2})) \geq \mathbb{E}_{1}^{t}(g(p_{1}))$ for all $t \in \mathbb{N}$.
\end{theorem}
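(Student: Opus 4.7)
The plan is to apply Theorem~\ref{Theorem 1} in each case, with $D$ chosen to match the relevant stochastic order: $D$ is the set of increasing functions for part~(i), and the set of increasing and convex functions for part~(ii). For each part I must verify the two hypotheses of Theorem~\ref{Theorem 1}, namely that $P_2$ is $D$-preserving and that $P_2(s,\cdot)\succeq_D P_1(s,\cdot)$ for every $s\in S$. Once those are in hand, Theorem~\ref{Theorem 1} yields $\mu_2^t\succeq_D \mu_1^t$ for all $t$, and the conclusion on $\mathbb{E}_i^t(g(p_i))$ follows from a two-step inequality: $\int g(s,p_2)\mu_2^t(ds)\ge \int g(s,p_2)\mu_1^t(ds)\ge \int g(s,p_1)\mu_1^t(ds)$, where the first step uses $g(\cdot,p_2)\in D$ together with $\mu_2^t\succeq_D\mu_1^t$ and the second uses the pointwise hypothesis $g(s,p_2)\ge g(s,p_1)$.

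For part~(i), verifying $P_2$ is $I$-preserving is direct: for increasing $f$, monotonicity of $p_2$ makes $(s,a)\mapsto \int f(s')p_2(s,a,ds')$ increasing in $(s,a)$, and composing with the increasing map $s\mapsto g(s,p_2)$ keeps it increasing in $s$. The stochastic dominance $P_2(s,\cdot)\succeq_{st} P_1(s,\cdot)$ I would prove by a sandwich: for any increasing $f$,
\begin{equation*}
\int f(s')p_2(s,g(s,p_2),ds')\ \ge\ \int f(s')p_2(s,g(s,p_1),ds')\ \ge\ \int f(s')p_1(s,g(s,p_1),ds'),
\end{equation*}
where the first inequality uses monotonicity of $p_2$ in its action argument combined with $g(s,p_2)\ge g(s,p_1)$, and the second uses the definition of $p_2\succeq_{st}p_1$.

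For part~(ii), the same sandwich idea works for $P_2(s,\cdot)\succeq_{ICX} P_1(s,\cdot)$: for $f$ increasing and convex, the first inequality follows from $f$ being increasing together with monotonicity of $p_2$ and $g(s,p_2)\ge g(s,p_1)$, and the second from convexity of $f$ combined with $p_2\succeq_{CX}p_1$. The delicate step, and I expect the main obstacle, is proving that $P_2$ is $ICX$-preserving. Increasingness of $s\mapsto \int f(s')P_2(s,ds')$ follows as in part~(i). For convexity, set $\phi(s,a)=\int f(s')p_2(s,a,ds')$; since $p_2$ is convexity-preserving and $f$ is convex, $\phi$ is jointly convex in $(s,a)$, and since $p_2$ is monotone and $f$ is increasing, $\phi$ is also increasing in $a$. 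Then for $s_\lambda=\lambda s_1+(1-\lambda)s_2$, convexity of $g(\cdot,p_2)$ gives $g(s_\lambda,p_2)\le \lambda g(s_1,p_2)+(1-\lambda)g(s_2,p_2)$, and combining this with monotonicity of $\phi$ in $a$ and then joint convexity of $\phi$ yields
\begin{equation*}
\phi(s_\lambda,g(s_\lambda,p_2))\ \le\ \phi\bigl(s_\lambda,\lambda g(s_1,p_2)+(1-\lambda)g(s_2,p_2)\bigr)\ \le\ \lambda\phi(s_1,g(s_1,p_2))+(1-\lambda)\phi(s_2,g(s_2,p_2)),
\end{equation*}
which is the convexity we need. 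With both preservation properties established, Theorem~\ref{Theorem 1} closes the argument exactly as in part~(i), using that $g(\cdot,p_2)$ is increasing and convex so it lies in the test class $D$.
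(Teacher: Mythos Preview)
Your proposal is correct and follows essentially the same route as the paper: you apply Theorem~\ref{Theorem 1} with $D$ the class of increasing (respectively, increasing and convex) functions, verify $P_2$ is $D$-preserving and $P_2(s,\cdot)\succeq_D P_1(s,\cdot)$ via the same monotonicity/convexity sandwich, and conclude with the same two-step inequality on $\int g(s,p_2)\mu_i^t(ds)$. Your handling of the $ICX$-preservation step in part~(ii)---introducing $\phi(s,a)$, using its joint convexity and monotonicity in $a$, and composing with the convex $g(\cdot,p_2)$---is exactly the paper's argument, written in a slightly cleaner form.
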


\subsection{\label{Section: discount}A change in the discount factor or in the payoff function}
In this section we provide sufficient conditions for the monotonicity of the policy function in the state variable, and for the monotonicity of the policy function in other parameters of the MDP, including the discount factor and the parameters that influence the single-period payoff function. Our stochastic comparative statics results in Section \ref{SCS} rely on these monotonicity properties. Thus, we provide  conditions on the model's primitives that ensure stochastic comparative statics results.

The monotonicity of the policy function in the state variable follows from the conditions on the model's primitives provided in \cite{topkis2011supermodularity}. We note that these conditions are not necessary for deriving monotonicity results regarding the policy function, and in some specific applications one can still derive these monotonicity results using different techniques or under different assumptions.\protect\footnote{
For example, see \cite{lovejoy1987ordered} and \cite{hopenhayn1992stochastic}. See also \cite{smith2002structural} for conditions that guarantee that the value function is monotone and has increasing differences.} 

Recall that a function $f :S \times E \rightarrow \mathbb{R}$ is said to have increasing differences in $(s ,e)$ on $S \times E$ if for all $e_{2} ,e_{1} \in E$ and $s_{2} ,s_{1} \in S$ such that $e_{2} \succeq e_{1}$ and $s_{2} \geq s_{1}$, we have \begin{equation*}f(s_{2} ,e_{2}) -f(s_{2} ,e_{1}) \geq f(s_{1} ,e_{2}) -f(s_{1} ,e_{1}) .
\end{equation*} A function $f$ has decreasing differences if $-f$ has increasing differences.

A set $B \in \mathcal{B}(S)$ is called an upper set if $s_{1} \in B$ and $s_{2} \geq s_{1}$ imply $s_{2} \in B$. The transition probability $p \in E_{p}$ has stochastically increasing differences if $p(s ,a ,B)$ has increasing differences for every upper set $B$. See \cite{topkis2011supermodularity} for examples of transition probabilities that have stochastically increasing differences. The optimal policy correspondence $G$ is said to be ascending if $s_{2} \geq s_{1}$, $b \in G(s_{1})$, and $b^{ \prime } \in G(s_{2})$ imply $\max \{b ,b^{ \prime }\} \in G(s_{2})$ and $\min \{b ,b^{ \prime }\} \in G(s_{1})$. In particular, if $G$ is ascending, then $\min G(s)$ and $\max G(s)$ are increasing functions. \cite{topkis2011supermodularity} provides conditions under which the optimal policy correspondence $G$ is ascending. These conditions are summarized in the following assumption:

\begin{assumption}
\label{Ass Topkis}(i) $r(s ,a)$ is increasing in $s$ and has increasing differences.

(ii) $p$ is monotone and has stochastically increasing differences.

(iii) For all $s_{1} ,s_{2} \in S$,  $s_{1} \leq s_{2}$ implies $\Gamma (s_{1}) \subseteq \Gamma (s_{2})$.
\end{assumption}

Theorem \ref{Thorem DISCOUNT} shows that under Assumption \ref{Ass Topkis}, the policy function $g(s,\beta )$ is increasing in the discount factor. Furthermore, if the single period payoff function $r(s ,a ,c)$ depends on  some parameter $c$ and has increasing differences, then the policy function is increasing in the parameter $c$.

\begin{theorem}
\label{Thorem DISCOUNT}Suppose that Assumption \ref{Ass Topkis} holds and that $\Gamma(s)$ is ascending.  

(i) Let $0 <\beta _{1} \leq \beta _{2} <1$. Then $g(s ,\beta _{2}) \geq g(s ,\beta _{1})$ for all $s \in S$ and $\mathbb{E}_{2}^{t}(g(\beta _{2})) \geq \mathbb{E}_{1}^{t}(g(\beta _{1}))$ for all $t \in \mathbb{N}$.

(ii) Let $c \in E$ be a parameter that influences the payoff function. If the payoff function $r(s ,a ,c)$ has increasing differences in $(a ,c)$ and in $(s ,c)$, then $g(s ,c_{2}) \geq g(s ,c_{1})$ for all $s \in S$, and $\mathbb{E}_{2}^{t}(g(c_{2})) \geq \mathbb{E}_{1}^{t}(g(c_{1}))$ for all $t \in \mathbb{N}$ whenever $c_{2} \succeq c_{1}$.
\end{theorem}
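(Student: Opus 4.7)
The plan is to reduce both parts to an application of Corollary \ref{Parameter}: that corollary requires monotonicity of $p$ (given by Assumption \ref{Ass Topkis}(ii)), monotonicity of $g(s, e)$ in $s$, and monotonicity of $g(s, e)$ in $e$ (with $e = \beta$ in part (i) and $e = c$ in part (ii)). The first two follow by the standard Topkis argument applied to the Bellman fixed point: under Assumption \ref{Ass Topkis}, $V(\cdot, e)$ is increasing in $s$ because $T$ preserves monotonicity in $s$, and then $h(s, a, e, V(\cdot, e))$ has increasing differences in $(s, a)$ (since $p$ has stochastically increasing differences and $V$ is increasing in $s$); combined with $\Gamma$ ascending, Topkis' theorem yields $g(\cdot, e)$ increasing in $s$. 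The heart of the proof is therefore to establish monotonicity of $g(s, e)$ in $e$.

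For this I would show that $h(s, a, e, V(\cdot, e))$ has increasing differences in $(a, e)$ and invoke Topkis. The cross-difference decomposes into a reward term and an integral term. In part (ii), the reward term $r(s, a_2, c_2) - r(s, a_1, c_2) - r(s, a_2, c_1) + r(s, a_1, c_1) \geq 0$ by hypothesis, and the integral term $\beta \int [V(s', c_2) - V(s', c_1)]\,[p(s, a_2, ds') - p(s, a_1, ds')] \geq 0$ follows from monotonicity of $p$ (which makes the signed measure $\nu := p(s, a_2, \cdot) - p(s, a_1, \cdot)$ integrate increasing functions nonnegatively) provided $V(\cdot, c_2) - V(\cdot, c_1)$ is increasing in $s'$. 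In part (i) the reward term vanishes and the integral term becomes $\beta_2 \int V(\cdot, \beta_2)\,d\nu - \beta_1 \int V(\cdot, \beta_1)\,d\nu$; since $V(\cdot, \beta)$ is increasing in $s$ each $\int V(\cdot, \beta_i)\,d\nu \geq 0$, and since $V$ has increasing differences in $(s, \beta)$ one has $\int V(\cdot, \beta_2)\,d\nu \geq \int V(\cdot, \beta_1)\,d\nu$, so with $\beta_2 \geq \beta_1 > 0$ the required inequality holds.

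Hence the central technical task is to show that $V(s, e)$ has increasing differences in $(s, e)$. I would prove this by value iteration: set $V_0^e \equiv 0$, $V_{n+1}^e = T_e V_n^e$, and induct that $V_n^e(s)$ is increasing in $s$ and has increasing differences in $(s, e)$. The inductive step verifies that the inside-max function $F_n(s, a, e) := r(s, a, e) + \beta \int V_n^e(s')\,p(s, a, ds')$ has increasing differences in each of the pairs $(s, a)$, $(a, e)$, $(s, e)$ — using Assumption \ref{Ass Topkis} for $(s, a)$, the hypothesized increasing differences of $r$ in part (ii), and the inductive hypothesis on $V_n^e$ for the other two pairs via the same signed-measure argument as above. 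Taking $g_{n+1}^e(s) := \max\{a \in \Gamma(s) : F_n(s, a, e) = V_{n+1}^e(s)\}$, Topkis' argmax theorem gives $g_{n+1}^e$ monotone in both $s$ and $e$; a direct case analysis — transferring feasibility between $s_1$ and $s_2$ via $\Gamma$ ascending, exploiting the monotone argmax, and combining the three increasing-differences properties of $F_n$ — then shows $V_{n+1}^e$ has increasing differences in $(s, e)$. Passing to the limit via uniform convergence $T_e^n 0 \to V(\cdot, e)$ transfers the property.

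The main obstacle will be the inductive step in part (i): both the scalar $\beta$ and the integrand $V_n^\beta$ vary with the parameter, so verifying the $(a, \beta)$ and $(s, \beta)$ cross-differences of $F_n$ requires combining the inductive increasing differences of $V_n^\beta$ with the nonnegativity of $\int V_n^\beta\,d\nu$ (itself coming from monotonicity of $V_n^\beta$ and of $p$) and with $\beta_2 \geq \beta_1 > 0$. Part (ii) is cleaner because $\beta$ is fixed and the hypothesized increasing differences of $r$ in $(a, c)$ and $(s, c)$ supply the primary supermodular structure.
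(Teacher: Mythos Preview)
Your proposal is correct and follows essentially the same route as the paper: value iteration to establish that $V(s,e)$ is increasing in $s$ and has increasing differences in $(s,e)$, then Topkis' theorem applied to $h(s,a,e,V)$ to get $g$ monotone in both arguments, then Corollary~\ref{Parameter} for the stochastic comparative statics. The only cosmetic difference is that where you propose a ``direct case analysis'' to show $T_e$ preserves increasing differences in $(s,e)$ once $h$ has increasing differences in all three pairs $(s,a)$, $(a,e)$, $(s,e)$, the paper simply invokes this as a known lemma (Proposition~\ref{LOVEJOY}, due to Lovejoy and Hopenhayn--Prescott); your signed-measure computation for the $(a,\beta)$ and $(s,\beta)$ cross-differences in part~(i) is exactly the paper's argument written in slightly different notation.
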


\subsection{\label{Section: transition}A change in the transition probability function}
In this section we study stochastic comparative statics results related to a change in the transition function. We provide conditions on the transition function and on the payoff function that ensure that $p_{2} \succeq _{st}p_{1}$ implies comparative statics results and stochastic comparative statics results. We assume that the transition function $p_{i}$ is given by $p_{i}(s ,a ,B) =\Pr (m(s ,a ,\epsilon ) \in B)$ for all $B \in \mathcal{B}(S)$, where $\epsilon $ is a random variable with law $v$ and support $\mathcal{V} \subseteq \mathbb{R}^{k}$. Theorem \ref{Theorem Transition} provides conditions on the function $m$ that imply that the policy function is higher when $v$ is higher in the sense of stochastic dominance. In Section \ref{Sec: cont ran walk}, we provide an example of a controlled random walk where the conditions on $m$ are satisfied.

\begin{theorem}
\label{Theorem Transition}Suppose that $p_{i}(s,a ,B) =\Pr (m(s ,a ,\epsilon _{i}) \in B)$ where $m$ is convex, increasing, continuous, and has increasing differences in $(s ,a)$, $(s ,\epsilon )$ and $(a ,\epsilon )$; and $\epsilon _{i}$ has the law $v_{i}$, $i =1 ,2$. $r(s ,a)$ is convex and increasing in $s$ and has increasing differences.  For all $s_{1}, s_{2} \in S$, we have $\Gamma (s_{1}) = \Gamma (s_{2})$.

If $v_{2} \succeq _{st}v_{1}$ then

(i) $g(s ,p_{2}) \geq g(s ,p_{1})$ for all $s \in S$ and $g(s ,p_{2})$ is increasing in $s$.

(ii) $\mathbb{E}_{2}^{t}(g(p_{2})) \geq \mathbb{E}_{1}^{t}(g(p_{1}))$ for all $t \in \mathbb{N}$.
\end{theorem}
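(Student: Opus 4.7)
The plan is to invoke Theorem~\ref{TRANSITION}(i) once part (i) is in hand. The two remaining hypotheses of that theorem---that $p_2$ is monotone and that $p_2\succeq_{st}p_1$---are immediate: for any increasing $f$, $\int f(s')\,p_2(s,a,ds')=\mathbb{E}[f(m(s,a,\epsilon_2))]$ is increasing in $(s,a)$ because $m$ is increasing in $s$ and $a$; and since $m$ is increasing in $\epsilon$ together with $v_2\succeq_{st}v_1$, $\mathbb{E}_{v_2}[f(m(s,a,\epsilon))]\geq\mathbb{E}_{v_1}[f(m(s,a,\epsilon))]$ for every increasing $f$. It remains to prove part (i).

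\noindent\textbf{Structural properties and monotonicity in $s$.} First I will show by value iteration starting from $f_0\equiv 0$ that each $V_i:=V(\cdot,p_i)$ is convex and increasing in $s$. If $f$ is convex and increasing, then because $m$ is convex and increasing in $(s,a)$, the composition $f\circ m$ is convex in $(s,a)$, in particular in $s$ for each $(a,\epsilon)$; hence for each fixed $a$, $r(s,a)+\beta\mathbb{E}_{v_i}[f(m(s,a,\epsilon))]$ is convex and increasing in $s$, and $\max_a$ over the constant set $\Gamma$ preserves both properties. Monotonicity of $g(s,p_2)$ in $s$ then follows from Topkis' theorem applied to $h(s,a,p_2,V_2)$: the payoff $r$ has increasing differences by hypothesis, and the expectation term inherits them via the lemma that if $W$ is convex and increasing and $m$ is increasing with increasing differences in $(s,a)$, then $W\circ m$ has increasing differences in $(s,a)$. (Setting $x=m(s_1,a_1,\epsilon)$, $y=m(s_2,a_1,\epsilon)$, $x'=m(s_1,a_2,\epsilon)$, $y'=m(s_2,a_2,\epsilon)$ one gets $x\leq x'$, $y\leq y'$, $y-x\leq y'-x'$, and convex-increasing $W$ then yields $W(y')-W(x')\geq W(y)-W(x)$.)

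\noindent\textbf{Comparative statics via joint induction.} For the harder claim $g(s,p_2)\geq g(s,p_1)$ I would proceed by a joint induction on the value iterates $f^{(i)}_n:=T_i^n 0$, maintaining at each $n$: (P1) $f^{(i)}_n$ convex and increasing; (P2) the largest maximizer $a^{(i)}_n(\cdot)$ increasing in $s$; (P3) $a^{(2)}_n(s)\geq a^{(1)}_n(s)$ for all $s$; and (P4) $f^{(2)}_n-f^{(1)}_n$ increasing in $s$. The base $n=0$ is trivial and (P1)--(P2) propagate as in the preceding paragraph. For (P3) I decompose
\[
h^{(2)}_n - h^{(1)}_n \;=\; \beta(\mathbb{E}_{v_2}-\mathbb{E}_{v_1})\bigl[f^{(2)}_{n-1}(m(s,a,\epsilon))\bigr] + \beta\mathbb{E}_{v_1}\bigl[(f^{(2)}_{n-1}-f^{(1)}_{n-1})(m(s,a,\epsilon))\bigr];
\]
the first summand is increasing in $a$ because $f^{(2)}_{n-1}\circ m$ has increasing differences in $(a,\epsilon)$ (by the lemma applied to the pair $(a,\epsilon)$) together with $v_2\succeq_{st}v_1$, and the second is increasing in $a$ by (P4) at $n-1$ combined with $m$ increasing in $a$. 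Topkis then yields (P3) at step $n$.

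\noindent\textbf{Main obstacle: propagating (P4) via envelope.} The subtle step is propagating (P4) to step $n$. By (P1), $f^{(i)}_n$ is convex in $s$, hence absolutely continuous on compacta with one-sided derivatives everywhere, and the envelope theorem delivers
\[
f^{(i)}_n(s_2)-f^{(i)}_n(s_1) \;=\; \int_{s_1}^{s_2} \partial_s h^{(i)}_n(x,a^{(i)}_n(x))\,dx,
\]
so
\[
[f^{(2)}_n-f^{(1)}_n](s_2)-[f^{(2)}_n-f^{(1)}_n](s_1) \;=\; \int_{s_1}^{s_2}\bigl[\partial_s h^{(2)}_n(x,a^{(2)}_n(x))-\partial_s h^{(1)}_n(x,a^{(1)}_n(x))\bigr]\,dx.
\]
The integrand is nonnegative via the chain $\partial_s h^{(2)}_n(x,a^{(2)}_n(x))\geq \partial_s h^{(1)}_n(x,a^{(2)}_n(x))\geq \partial_s h^{(1)}_n(x,a^{(1)}_n(x))$: the first inequality holds because $h^{(2)}_n-h^{(1)}_n$ is increasing in $s$ (by the analogue of the decomposition above, using increasing differences of $f^{(2)}_{n-1}\circ m$ in $(s,\epsilon)$ together with (P4) at $n-1$), and the second because (P3) at $n$ gives $a^{(2)}_n(x)\geq a^{(1)}_n(x)$ while $h^{(1)}_n$ has increasing differences in $(s,a)$. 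This closes the induction; uniform convergence $f^{(i)}_n\to V_i$ and upper hemicontinuity of $\arg\max$ transfer (P2) and (P3) to the limit, yielding part (i).
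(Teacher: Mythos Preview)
Your overall architecture is sound and matches the paper's: show $V$ is convex and increasing in $s$, show $V$ has increasing differences in $(s,p)$, then invoke Topkis for both parts of (i), and finally use Theorem~\ref{TRANSITION}(i) (the paper cites Corollary~\ref{Parameter}, same content here) for (ii). The decomposition you use for (P3)---splitting $h^{(2)}-h^{(1)}$ into a change-of-measure piece and a change-of-integrand piece---is exactly how the paper establishes that $h$ has increasing differences in $(a,p)$ and $(s,p)$.

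The one substantive divergence is your propagation of (P4). You route it through an envelope identity and the chain $\partial_s h^{(2)}_n(x,a^{(2)}_n)\geq\partial_s h^{(1)}_n(x,a^{(2)}_n)\geq\partial_s h^{(1)}_n(x,a^{(1)}_n)$, which requires (P3) at step~$n$ as an input and tacitly assumes $S\subseteq\mathbb{R}$ plus enough regularity for the integral envelope formula. The paper bypasses all of this with Proposition~\ref{LOVEJOY} (the Hopenhayn--Prescott/Lovejoy lemma): once $h$ has increasing differences in $(s,a)$, $(s,p)$, and $(a,p)$, the max over a constant feasible set automatically has increasing differences in $(s,p)$---no envelope theorem, no appeal to the maximizer, and valid for $S\subseteq\mathbb{R}^n$. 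This also cleans up your limit step: rather than trying to pass (P3) to the limit through upper hemicontinuity of $\arg\max$ (which does not by itself pin down the \emph{largest} selection), the paper passes only the value-function properties (convexity, monotonicity, increasing differences) to the limit $V$---these are closed under uniform convergence---and then applies Topkis once to $h(\cdot,\cdot,p,V)$. Your argument can be salvaged the same way: keep (P4), drop the attempt to carry (P3) through the limit, and apply Topkis at the end; but the shortest path is simply to replace your envelope step by Proposition~\ref{LOVEJOY}.
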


\section{\label{Sectopn applications} Applications}
In this section we apply our results to several dynamic optimization models from the economics and operations research literature.

\subsection{Capital accumulation with adjustment costs}
Capital accumulation models are widely studied in the investment theory literature \citep{stokey1989}. We consider a standard capital accumulation model with adjustment costs \citep{hopenhayn1992stochastic}. In this model, a firm maximizes its expected discounted profit over an infinite horizon. The single-period revenues depend on the demand and on the firm's capital. The demand evolves exogenously in a Markovian fashion. In each period, the firm decides on the next period's capital level and incurs an adjustment cost that depends on the current capital level and on the next period's capital level. Using the stochastic comparative statics results developed in the previous section, we find conditions that ensure that higher future demand, in the sense of first order stochastic dominance, increases the expected long run capital accumulated. We provide the details below.

Consider a firm that maximizes its expected discounted profit. The firm's single-period payoff function $r$ is given by \begin{equation*}r(s ,a) =R(s_{1} ,s_{2}) -c(s_{1} ,a)
\end{equation*} where $s=(s_{1},s_{2})$. The revenue function $R$ depends on an exogenous demand shock $s_{2} \in S_{2} \subseteq \mathbb{R}^{n -1}$, and on the current firm's capital stock $s_{1} \in S_{1} \subseteq \mathbb{R}_{ +}$. The state space is given by $S = S_{1} \times S_{2}$. The demand shocks follow a Markov process with a transition function $Q$. The firm chooses the next period's capital stock $a \in \Gamma (s_{1})$ and incurs an adjustment cost of $c(s_{1} ,a)$. The transition probability function $p$ is given by \begin{equation*}p(s ,a ,B) =1_{D}(a)Q(s_{2} ,C) ,
\end{equation*}where $D \times C =B$, $D$ is a measurable set in $\mathbb{R}$, $C$ is a measurable set in $\mathbb{R}^{n -1}$, and $Q$ is a Markov kernel on $S_{2} \subseteq \mathbb{R}^{n -1}$.  

It is easy to see that if $Q$ is monotone then $p(s ,a ,B) =1_{D}(a)Q(s_{2} ,C)$ is monotone and that $Q_{2} \succeq _{st}Q_{1}$ implies $p_{2} \succeq _{st}p_{1}$.

Assume that the revenue function $R$ is continuous and has increasing differences, that $c$ is continuous and has decreasing differences, and that $\Gamma (s)$ is ascending. Under these conditions, \cite{hopenhayn1992stochastic} show that the policy function $g(s ,p)$ is increasing in $s$ if $Q$ is monotone. If, in addition, $Q_{2} \succeq _{st}Q_{1}$, then $g(s ,p_{2}) \geq g(s ,p_{1})$ for all $s$ (see Corollary 7 in \cite{hopenhayn1992stochastic}). Thus, part (i) in Theorem \ref{TRANSITION} implies that $\mathbb{E}_{2}^{t}(g(p_{2})) \geq \mathbb{E}_{1}^{t}(g(p_{1}))$ for all $t \in \mathbb{N}$.

\begin{proposition}
Let $Q_{1}$ and $Q_{2}$ be two Markov kernels on $S_{2}$. Assume that $R$ is continuous and has increasing differences, $c$ is continuous and has decreasing differences, $\Gamma (s)$ is ascending, and $\Gamma (s_{1}) \supseteq \Gamma (s_{1}^{\prime})$ whenever $s_{1} \geq s_{1}^{\prime}$. Assume that $Q_{2}$ is monotone and that $Q_{2} \succeq _{st}Q_{1}$. Then under $Q_{2}$ the expected capital accumulation is higher than under $Q_{1}$, i.e., $\mathbb{E}_{2}^{t}(g(p_{2})) \geq \mathbb{E}_{1}^{t}(g(p_{1}))$ for all $t \in \mathbb{N}$.
\end{proposition}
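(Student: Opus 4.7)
The plan is to verify the four hypotheses of Theorem \ref{TRANSITION}(i) for the induced transition kernels $p_1, p_2$ on $S = S_1 \times S_2$, and then invoke the theorem directly. Three of those hypotheses are essentially handed to us by the assumptions together with Hopenhayn--Prescott's results; the only part that requires a small calculation is translating properties of $Q$ into properties of $p$.

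First, I would verify that $p_2$ is monotone. Write a state as $s = (s_1, s_2)$. For a measurable rectangle $B = D \times C$ and an increasing measurable $f : S \to \mathbb{R}$, the structure $p_2(s,a,B) = 1_D(a) Q_2(s_2, C)$ yields
\begin{equation*}
\int_S f(s') \, p_2(s,a,ds') = \int_{S_2} f(a, s_2') \, Q_2(s_2, ds_2').
\end{equation*}
Because $f$ is increasing in its first coordinate, the map $a \mapsto f(a, s_2')$ is increasing; because $Q_2$ is monotone and $f(a,\cdot)$ is increasing, the integral is increasing in $s_2$; and since the right-hand side does not depend on $s_1$, the integral is trivially increasing in $s_1$. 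So the right-hand side is increasing in $(s,a)$, i.e.\ $p_2$ is monotone. The same display shows that $Q_2 \succeq_{st} Q_1$ implies $p_2 \succeq_{st} p_1$: for each fixed $a$, $f(a,\cdot)$ is an increasing function on $S_2$, so $\int f(a,s_2') Q_2(s_2,ds_2') \geq \int f(a,s_2') Q_1(s_2,ds_2')$.

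Next I would import the two monotonicity properties of the policy function already cited in the text. Under the assumptions that $R$ is continuous with increasing differences, $c$ is continuous with decreasing differences, $\Gamma(s)$ is ascending with $\Gamma(s_1) \supseteq \Gamma(s_1')$ for $s_1 \geq s_1'$, and $Q_2$ is monotone, the result of \cite{hopenhayn1992stochastic} gives that $g(s,p_2)$ is increasing in $s$. The same paper's Corollary~7, applied with $Q_2 \succeq_{st} Q_1$, yields $g(s,p_2) \geq g(s,p_1)$ for every $s \in S$.

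Having assembled all four hypotheses of Theorem \ref{TRANSITION}(i) --- $p_2$ monotone, $g(\cdot,p_2)$ increasing, $g(\cdot,p_2) \geq g(\cdot,p_1)$, and $p_2 \succeq_{st} p_1$ --- the conclusion $\mathbb{E}_2^t(g(p_2)) \geq \mathbb{E}_1^t(g(p_1))$ for all $t \in \mathbb{N}$ follows immediately. The main ``obstacle'' here is essentially bookkeeping: checking carefully that the product-rectangle structure of $p$ preserves monotonicity and the stochastic order; the harder work (the comparative statics of $g$ in $s$ and in $Q$) is subsumed in the invocation of Hopenhayn--Prescott.
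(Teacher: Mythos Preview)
Your proposal is correct and follows essentially the same approach as the paper: verify that $Q_2$ monotone implies $p_2$ monotone and that $Q_2 \succeq_{st} Q_1$ implies $p_2 \succeq_{st} p_1$, invoke \cite{hopenhayn1992stochastic} (including their Corollary~7) for the two monotonicity properties of $g$, and then apply Theorem~\ref{TRANSITION}(i). Your write-up in fact supplies the explicit computation for the ``easy to see'' step about $p$ that the paper leaves to the reader.
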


\subsection{\label{Section Popescu} Dynamic pricing with a reference effect and an uncertain memory factor}
In this section we consider a dynamic pricing model with a reference effect as in \cite{popescu2007dynamic}. In this model the demand is sensitive to the firm's pricing history. In particular, consumers form a reference price that influences their demand. As in \cite{popescu2007dynamic}, we consider a profit-maximizing monopolist who faces a homogeneous stream of repeated customers over an infinite time horizon. In each period, the monopolist decides on a price $a \in A : =[0 ,\overline{a}]$ to charge the consumers. Assume for simplicity that the marginal cost is $0$. The resulting single-period payoff function is given by
\begin{equation*}r(s ,a) =aD(s ,a)
\end{equation*} where $s \in S \subseteq \mathbb{R}$ is the current reference price and $D(s ,a)$ is the demand function that depends on the reference price $s$ and on the price that the monopoly charges $a$. We assume that the function $D(s ,a)$ is continuous, non-negative, decreasing in $p$, increasing in $s$, has increasing differences, and is convex in $s$. If the current reference price is $s$ and the firm sets a price of $a$ then the next period's reference price is given by $\gamma s +(1 -\gamma )a$ (see \cite{popescu2007dynamic} for details on the micro foundations of this structure). $\gamma $ is called the memory factor. In contrast to the model of \cite{popescu2007dynamic}, we assume that the memory factor $\gamma $ is not deterministic. More precisely, we assume that the memory factor $\gamma $ is a random variable on $[0 ,1]$ with law $v$. So the transition probability function $p$ is given by \begin{equation*}p(s ,a ,B) =v\{\gamma  \in [0 ,1] :(\gamma s +(1 -\gamma )a) \in B\}
\end{equation*}for all $B \in \mathcal{B}(S)$. We show that even when the memory factor $\gamma $ is a random variable,  the result of \cite{popescu2007dynamic} holds in expectation, i.e., the long run expected prices are increasing in the current reference price. We also show that an increase in the discount factor increases the current optimal price and the long run expected prices.

\begin{proposition}\label{POPESCU}
Suppose that the function $D(s ,a)$ is continuous, non-negative, decreasing in $p$, increasing and convex in $s$, and has increasing differences.

(i) The optimal pricing policy $g(s)$ is increasing in the reference price $s$. 

(ii) The expected optimal prices in each period are higher when the initial reference price is higher.  

(iii)  $0 <\beta _{1} \leq \beta _{2} <1$ implies that $g(s ,\beta _{2}) \geq g(s ,\beta _{1})$ for all $s \in S$ and $\mathbb{E}_{2}^{t}(g(\beta _{2})) \geq \mathbb{E}_{1}^{t}(g(\beta _{1}))$ for all $t \in \mathbb{N}$.
\end{proposition}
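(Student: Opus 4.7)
The overall strategy is to establish that the value function $V_\beta(s)$ is increasing and convex in $s$, which drives all three parts, and additionally that $V$ is supermodular in $(s,\beta)$, which yields part (iii). Note that Theorem \ref{Thorem DISCOUNT} cannot be cited directly for part (iii), because $p$ fails to have stochastically increasing differences: for an upper set $B=[c,\infty)$ and a point mass $v=\delta_{1/2}$, one can check that $\Pr(\gamma s_2+(1-\gamma)a_2\geq c)-\Pr(\gamma s_2+(1-\gamma)a_1\geq c)$ may be strictly smaller than the analogous expression at $s_1<s_2$, so a custom argument is needed for the $\beta$-monotonicity.

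For parts (i) and (ii), I would first verify by value iteration that the Bellman operator preserves the class $\mathcal{C}$ of functions that are increasing and convex in $s$. The reward $aD(s,a)$ is increasing and convex in $s$ since $a\geq 0$ and $D$ is increasing and convex in $s$; the continuation $\int V(\gamma s+(1-\gamma)a)\,v(d\gamma)$ inherits these properties from any $V\in\mathcal{C}$ because the transition is affine in $s$. Pointwise maxima of increasing convex functions are increasing and convex, so $T\mathcal{C}\subseteq\mathcal{C}$ and the uniform limit $V$ lies in $\mathcal{C}$. The Bellman objective $h(s,a,V)=aD(s,a)+\beta\int V(\gamma s+(1-\gamma)a)\,v(d\gamma)$ then has increasing differences in $(s,a)$: the reward piece by a direct calculation using $D$'s increasing differences and $a\geq 0$, and the continuation piece because convexity of $V$ combined with the affine transition yields a nonnegative mixed partial. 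Topkis' theorem gives (i). Part (ii) follows from Corollary \ref{Initial state}, noting that $p(s,a,B)=v\{\gamma:\gamma s+(1-\gamma)a\in B\}$ is monotone because $\gamma s+(1-\gamma)a$ is coordinate-wise increasing in $(s,a)$.

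For part (iii), the goal is to show that $h(s,a,\beta,V_\beta)$ has increasing differences in $(a,\beta)$. Writing $\beta_2 E_v[V_{\beta_2}(\cdot)]-\beta_1 E_v[V_{\beta_1}(\cdot)] = \beta_2 E_v[V_{\beta_2}(\cdot)-V_{\beta_1}(\cdot)]+(\beta_2-\beta_1)E_v[V_{\beta_1}(\cdot)]$ evaluated at $\gamma s+(1-\gamma)a$, this reduces to (a) $V_\beta$ increasing in $s$ (already known) and (b) $V_\beta$ having increasing differences in $(s,\beta)$. I would prove (b) by a strengthened induction on value iteration: the inductive hypothesis is that $V^n_\beta$ is increasing and convex in $s$ and that $V^n_{\beta_2}-V^n_{\beta_1}$ is increasing in $s$. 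The inductive step checks that $\phi^n(s,a,\beta)=aD(s,a)+\beta\int V^n_\beta(\gamma s+(1-\gamma)a)\,v(d\gamma)$ has increasing differences in each of the pairs $(a,s)$, $(a,\beta)$, $(s,\beta)$: the $(a,s)$ case is as in part (i), while $(a,\beta)$ and $(s,\beta)$ both reduce, via the same decomposition as above, to the inductive hypothesis plus monotonicity of $V^n_\beta$ in $s$. Topkis' theorem that the max of a jointly supermodular function is supermodular in the parameters then gives $V^{n+1}(s,\beta)=\max_a\phi^n(s,a,\beta)$ supermodular in $(s,\beta)$; monotonicity and convexity in $s$ are preserved as in parts (i)--(ii). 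Passing to the uniform limit, $V_\beta$ has increasing differences in $(s,\beta)$, so $h$ has increasing differences in $(a,\beta)$, and Topkis yields $g(s,\beta)$ increasing in $\beta$. The stochastic comparative statics assertion then follows from Corollary \ref{Parameter} with $e=\beta$ and $p_1=p_2=p$ monotone.

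The main obstacle is the propagation of supermodularity in $(s,\beta)$ through the Bellman operator in part (iii), which requires simultaneously carrying three pairwise increasing-differences conditions through the induction and invoking the less commonly used Topkis result that $\max_a$ of a jointly supermodular objective is supermodular in its remaining arguments. Everything else is either a direct application of the paper's corollaries or a routine monotonicity or convexity argument made possible by the convexity of $D$ in $s$ and the affine structure of the reference-price transition.
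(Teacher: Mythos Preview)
Your proposal is correct and follows essentially the same route as the paper. For part (i) the paper establishes convexity of $V$ (you also add monotonicity, which is straightforward and indeed needed for (iii)) and then deduces increasing differences of the Bellman objective in $(s,a)$ from that convexity, exactly as you do; for part (ii) the paper cites Corollary~\ref{Parameter} rather than Corollary~\ref{Initial state}, an immaterial difference; and for part (iii) the paper simply states that it ``follows from a similar argument to the arguments in the proof of Theorem~\ref{Thorem DISCOUNT}'', which is precisely the induction you spell out. Your observation that Assumption~\ref{Ass Topkis}(ii) fails here (so that Theorem~\ref{Thorem DISCOUNT} cannot be invoked verbatim) is correct and is the one point the paper glosses over: the convexity of $V$ you carry in the induction is exactly the substitute that restores increasing differences in $(s,a)$ and lets the rest of the Theorem~\ref{Thorem DISCOUNT} argument go through.
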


\subsection{Controlled random walks} \label{Sec: cont ran walk}
Controlled random walks are used to study controlled queueing systems and other phenomena in applied probability (for example, see \cite{serfozo1981optimal}). In this section we consider a simple controlled random walk on $\mathbb{R}$. At any period, the state of the system $s \in \mathbb{R}$ determines the current period's reward $c_{1}(s)$. The next period's state is given by $m(s ,a ,\epsilon ) =a +s +\epsilon $ where $\epsilon $ is a random variable with law $v$ and support $\mathcal{V} \subseteq \mathbb{R}$, and $a \in A$ is the action that the DM chooses. Thus, the process evolves as a random walk $s +\epsilon$ plus the DM's action $a$. When the DM chooses an action $a \in A$, a cost of $c_{2}(a)$ is incurred. We assume that $A \subseteq \mathbb{R}$ is a compact  set, $c_{1}(s)$ is an increasing and convex function, and $c_{2}$ is an increasing function. That is, the reward and the marginal reward are increasing in the state of the system and the costs are increasing in the action that the DM chooses.  

The single-period payoff function is given by $r(s ,a) =c_{1}(s) -c_{2}(a)$ and the transition probability function is given by \begin{equation*}p(s ,a ,B) =v\{\epsilon  \in \mathcal{V} :a +s +\epsilon  \in B\}
\end{equation*}for all $B \in \mathcal{B}(\mathbb{R})$. In this setting, when choosing an action $a$, the DM faces the following trade-off between the current payoff and future payoffs: while choosing a higher action $a$ has higher current costs, it increases the probability that the state of the system will be higher in the next period, and thus, a higher action increases the probability of higher future rewards.

We study how a change in the random variable $\epsilon$ affects the DM's current and future optimal decisions. When $c_{1}(s)$ is convex and increasing in $s$, it is easy to see that the transition function $m(s ,a ,\epsilon ) =a +s +\epsilon $ and the single-period function $r(s ,a) =c_{1}(s) -c_{2}(a)$ satisfy the conditions of Theorem \ref{Theorem Transition}. Thus, the proof of the following proposition follows immediately from Theorem \ref{Theorem Transition}.

\begin{proposition} \label{prop: Dist}
Suppose that $p_{i}(s ,a ,B) =\Pr (a +s +\epsilon _{i} \in B)$ where $\epsilon _{i}$ has the law $v_{i}$, $i =1 ,2$. Suppose that $c_{1}(s)$ is convex and increasing in $s$. Assume that   $v_{2} \succeq _{st}v_{1}$. 

Then $g(s ,p_{2}) \geq g(s ,p_{1})$ for all $s \in S$, $g(s ,p_{2})$ is increasing in $s$, and $\mathbb{E}_{2}^{t}(g(p_{2})) \geq \mathbb{E}_{1}^{t}(g(p_{1}))$ for all $t \in \mathbb{N}$.

\end{proposition}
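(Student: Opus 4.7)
The plan is to verify that the primitives of the controlled random walk satisfy the hypotheses of Theorem \ref{Theorem Transition} and then invoke that theorem directly, as the paragraph preceding the proposition already suggests. There are three things to check: (a) the structural properties required of the transition map $m(s,a,\epsilon) = s+a+\epsilon$, (b) the structural properties required of the single-period payoff $r(s,a) = c_1(s)-c_2(a)$, and (c) the constancy hypothesis on the feasibility correspondence $\Gamma$.

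For (a), since $m$ is affine in $(s,a,\epsilon)$ it is automatically continuous, convex, and componentwise increasing. Moreover, because $m$ is additively separable across its three arguments, every mixed second difference of the form $m(s_2,a_2,\epsilon)-m(s_2,a_1,\epsilon)-m(s_1,a_2,\epsilon)+m(s_1,a_1,\epsilon)$, and the analogous expressions for the pairs $(s,\epsilon)$ and $(a,\epsilon)$, vanishes identically. Hence $m$ trivially has increasing differences in each of the three required pairs. For (b), $c_1$ is convex and increasing in $s$ by hypothesis, so $r$ is convex and increasing in $s$; additive separability of $r$ in $(s,a)$ again collapses the cross-difference to zero, giving (weakly) increasing differences. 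For (c), the model specifies that the action lies in the compact set $A$ independently of $s$, so $\Gamma(s) = A$ for all $s$, hence $\Gamma(s_1) = \Gamma(s_2)$ for every $s_1,s_2 \in S$.

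With all hypotheses of Theorem \ref{Theorem Transition} verified, parts (i) and (ii) of that theorem, applied under the assumption $v_2 \succeq_{st} v_1$, deliver exactly the three conclusions of the proposition: $g(s,p_2)\geq g(s,p_1)$ for every $s \in S$, the monotonicity of $g(\cdot,p_2)$ in $s$, and the stochastic comparative statics inequality $\mathbb{E}_2^{t}(g(p_2)) \geq \mathbb{E}_1^{t}(g(p_1))$ for every $t \in \mathbb{N}$. There is no real obstacle: the whole argument is a checklist of hypotheses, and the affine-plus-separable structure of the model reduces every nontrivial supermodularity condition to an equality, so the verification is immediate by inspection.
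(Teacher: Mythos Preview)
Your proposal is correct and follows exactly the approach the paper takes: the paper states that the proof ``follows immediately from Theorem \ref{Theorem Transition}'' after noting that the affine transition map $m(s,a,\epsilon)=s+a+\epsilon$ and the separable payoff $r(s,a)=c_1(s)-c_2(a)$ satisfy that theorem's hypotheses. You have simply spelled out the verification that the paper leaves to the reader.
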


\subsection{\label{Section 3.3}Comparisons of stationary distributions}
Stationary equilibrium is the preferred solution concept for many models that describe large dynamic economies (see \cite{acemoglu2015robust} for examples of such models). In these models, there is a continuum of agents. Each agent has an individual state and solves a discounted dynamic programming problem given some parameters $e$ (usually prices). The parameters are determined by the aggregate decisions of all agents. Informally, a stationary equilibrium of these models consists of a set of parameters $e$, a policy function $g$, and a probability measure $\lambda$ on $S$ such that (i) $g$ is an optimal stationary policy given the parameters $e$, (ii) $\lambda$ is a stationary distribution of the states' dynamics $P(s ,B)$ given the parameters $e$, and (iii) the parameters $e$ are determined as a function of $\lambda $ and $g$.\protect\footnote{ Stationary equilibrium models are used to study a wide range of economic phenomena. Examples include models of industry equilibrium \citep{hopenhayn1992entry}, heterogeneous agent macro models   \citep{huggett1993risk} and \citep{aiyagari1994uninsured}, and many more.} 

The existence and uniqueness of a stationary probability measure $\lambda $ on $S$ in the sense that \begin{equation*}\lambda (B) =\int _{S}p(s ,g(s) ,B)\lambda (ds)
\end{equation*} for all $B \in \mathcal{B}(S)$ are widely studied.\protect\footnote{
For example, see \cite{hopenhayn1992stochastic},  \cite{kamihigashi2014stochastic}, and \cite{foss2018stochastic}.
} We now derive comparative statics results relating to how the stationary distribution $\lambda $ changes when the transition function $p$ changes. We denote the least stationary distribution by $\underline{\lambda}$ and the greatest stationary distribution by $\overline{\lambda}$.

\begin{proposition}
\label{Prop Comp Stationary}Suppose that $S$ is a compact set in $\mathbb{R}$.

(i) Let $E_{p ,i}$ be the set of all monotone transition probability functions $p$. Assume that $g(s ,p)$ is increasing  in $(s,p)$ on $S \times E_{p ,i}$ where $E_{p ,i}$ is endowed with the order $ \succeq _{st}$. Then the greatest stationary distribution $\overline{\lambda}$ and the least stationary distributions $\underline{\lambda}$ are increasing in $p$ on $E_{p ,i}$ with respect to $ \succeq _{st}$.\protect\footnote{
The existence of the greatest fixed point is guaranteed from the Tarski fixed-point theorem. For more details, see the Appendix and \cite{topkis2011supermodularity}.}   

(ii) Let $E_{p ,ic}$ be the set of all monotone and convexity-preserving transition probability functions $p$. Assume that $g(s,p)$ is convex in $s$ and is increasing in $(s,p)$ on $S \times E_{p ,ic}$ where $E_{p ,ic}$ is endowed with the order $ \succeq _{CX}$. Then the greatest stationary distribution $\overline{\lambda}$ and the least stationary distributions $\underline{\lambda}$ are increasing in $p$ on $E_{p ,ic}$ with respect to $ \succeq _{ICX}$.
\end{proposition}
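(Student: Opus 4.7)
The plan is to recast stationary distributions as fixed points of a monotone self-map on the complete lattice of probability measures on $S$, apply Tarski's theorem to obtain the extremal fixed points, and then propagate monotonicity in $p$ to those extremal fixed points. For each admissible $p$, define the operator $T_p$ on Borel probability measures on $S$ by
\[
(T_p\mu)(B)=\int_{S} p(s,g(s,p),B)\,\mu(ds),
\]
so that the stationary distributions of the kernel $P_p(s,\cdot)=p(s,g(s,p),\cdot)$ are precisely the fixed points of $T_p$.

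For part (i), I would work on the set of Borel probability measures on $S$ ordered by $\succeq_{st}$, which is a complete lattice when $S\subseteq \mathbb{R}$ is compact (via the CDF representation, with lattice operations given by pointwise sup/inf of CDFs after right-continuous modification). First, $T_p$ is order-preserving: for any increasing $f$, $\phi_p(s):=\int f(s')\,p(s,g(s,p),ds')$ is increasing in $s$, because $p$ is monotone and $g(\cdot,p)$ is increasing in $s$. Integrating $\phi_p$ against $\mu_2\succeq_{st}\mu_1$ yields $T_p\mu_2\succeq_{st} T_p\mu_1$, and Tarski produces $\bar\lambda_p$ and $\underline\lambda_p$. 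Second, if $p_2\succeq_{st} p_1$, then for every increasing $f$ and every $s$,
\[
\int f(s')\,p_2(s,g(s,p_2),ds')\;\geq\;\int f(s')\,p_2(s,g(s,p_1),ds')\;\geq\;\int f(s')\,p_1(s,g(s,p_1),ds'),
\]
the first inequality using $g(s,p_2)\geq g(s,p_1)$ together with monotonicity of $p_2$, and the second using $p_2\succeq_{st} p_1$. Integrating against any $\mu$ gives $T_{p_2}\mu\succeq_{st}T_{p_1}\mu$. The parameterized Tarski step then closes the loop: $\bar\lambda_{p_1}=T_{p_1}\bar\lambda_{p_1}\preceq_{st} T_{p_2}\bar\lambda_{p_1}$ exhibits $\bar\lambda_{p_1}$ as a subsolution of $T_{p_2}$, so $\bar\lambda_{p_2}=\sup\{\mu:\mu\preceq_{st} T_{p_2}\mu\}\succeq_{st}\bar\lambda_{p_1}$; a symmetric supersolution argument, applied to $T_{p_1}$ at $\underline\lambda_{p_2}$, gives $\underline\lambda_{p_2}\succeq_{st}\underline\lambda_{p_1}$.

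For part (ii), the same scheme applies with $\succeq_{ICX}$ replacing $\succeq_{st}$ on measures and $\succeq_{CX}$ replacing $\succeq_{st}$ on transitions. Two points need fresh verification. First, $T_p$ preserves $\succeq_{ICX}$: for $f$ increasing and convex, $\psi(s,a):=\int f(s')\,p(s,a,ds')$ is increasing in $(s,a)$ by monotonicity and convex in $(s,a)$ by convexity-preservation, and since $g(s,p)$ is increasing and convex in $s$ by hypothesis, the composition $\phi_p(s)=\psi(s,g(s,p))$ is increasing and convex by the standard fact that an increasing convex function composed with a vector whose coordinates are increasing and convex is itself increasing and convex. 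Second, $p_2\succeq_{CX} p_1$ yields the analogous two-step chain from part (i): $g(s,p_2)\geq g(s,p_1)$ together with $\psi$ increasing in $a$ for the first inequality, and $p_2\succeq_{CX}p_1$ applied to the increasing convex (hence convex) $f$ for the second. The Tarski step uses the complete-lattice structure on probability measures on compact $S$ under $\succeq_{ICX}$, obtained through the integrated-tail function $I_\mu(x)=\int(y-x)_+\,d\mu(y)$ and its pointwise sup/inf envelopes.

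The main obstacle is verifying the complete-lattice structure of the measure spaces so that Tarski applies, especially for $\succeq_{ICX}$, whose lattice structure is more delicate than that of $\succeq_{st}$. A cleaner fallback is a direct monotone-iteration construction: build $\bar\lambda_p$ and $\underline\lambda_p$ as weak limits of the monotone decreasing/increasing sequences $T_p^n\delta_{\max S}$ and $T_p^n\delta_{\min S}$, and compare the iterates term by term using $T_{p_2}\mu\succeq T_{p_1}\mu$ and monotonicity of each $T_{p_i}$ before passing to the limit in the appropriate stochastic order.
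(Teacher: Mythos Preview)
Your proposal is correct and follows essentially the same route as the paper: define the operator $\Phi(\lambda,p)(\cdot)=\int_S p(s,g(s,p),\cdot)\,\lambda(ds)$, verify it is increasing in $(\lambda,p)$ in the relevant stochastic order, and invoke a parameterized Tarski fixed-point comparison to conclude that the extremal fixed points are monotone in $p$. The paper is terser, delegating your two-step chain and the $ICX$-preservation argument to the proof of Theorem~\ref{TRANSITION} and citing \cite{muller2006stochastic} for the complete-lattice structure of $(\mathcal{P}(S),\succeq_{st})$ and $(\mathcal{P}(S),\succeq_{ICX})$ on compact $S\subseteq\mathbb{R}$, whereas you spell these out explicitly; your monotone-iteration fallback is unnecessary given that citation.
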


We apply Proposition \ref{Prop Comp Stationary} to a standard stationary equilibrium model \citep{huggett1993risk}. 

There is a continuum of ex-ante identical agents with mass $1$. The agents solve a consumption-savings problem when their income is fluctuating. Each agent's payoff function is given by $r(s ,a) =u(s-a)$ where $s$ denotes the agent's current wealth, $a$ denotes the agent's savings, $s -a$ is the agent's current consumption, and $u$ is the agent's utility function. Thus, when an agent consumes $s-a$, his single-period payoff is given by $u(s-a)$.\footnote{For simplicity we assume that all the agents are ex-ante identical, i.e., the agents have the same utility function and transition function. The model can be extended to the case of ex-ante heterogeneity.} Recall that a utility function is in the class of hyperbolic absolute risk aversion (HARA) utility functions if its absolute risk aversion $A \left (c\right )$ is hyperbolic. That is, if $A (c):= -\frac{u^{ \prime  \prime } \left (c\right )}{u^{ \prime } \left (c\right )} =\frac{1}{ac+b}$ for $c >\frac{ -b}{a}$. We assume that $u$ is in the HARA class and that the utility function's derivative $u^{\prime}$ is convex.

Savings are limited to a single risk-free bond. When the agents save an amount $a$ their next period's wealth is given by $Ra +y$ where $R$ is the risk-free bond's rate of return and $y \in Y =[\underline{y} ,\overline{y}] \subset \mathbb{R}_{ +}$ is the agents' labor income in the next period. The agents' labor income is a random variable with law $\nu$. Thus, the transition function is given by \begin{equation*}p(s ,a ,B) =\ensuremath{\operatorname*{}}\nu \{y \in Y :Ra +y \in B \}.
\end{equation*}

The set from which the agents can choose their savings level is given by $\Gamma (s) =[\underline{s} ,\min \{s ,\overline{s}\}]$ where $\underline{s} <0$ is a borrowing limit and $\overline{s} >0$ is an upper bound on savings.

A stationary equilibrium is given by a probability measure $\lambda $ on $S =[\underline{s} ,(1 +r)\overline{s} +\overline{y}]$, a rate of return $R$, and a stationary savings policy function $g$ such that (i) $g$ is optimal given $R$, (ii) $\lambda $ is a stationary distribution given $R$, i.e., $\lambda (B) =\int _{S} p(s,g(s),B) \lambda (ds)$, and (iii) markets clear in the sense that the total supply of savings equals the total demand for savings, i.e., $\int g(s)\lambda (ds) =0$.

If the agents' utility function is in the HARA class then the savings policy function $g(s)$ is convex and increasing (see \cite{jensen2017distributional}). It is easy to see that $p$ is convexity-preserving and monotone. Furthermore, when  $u^{\prime }$ is convex then the policy function $g(s,p)$ is increasing in $p$ with respect to the convex order, i.e., $g(s ,p_{2}) \geq g(s ,p_{1})$ whenever $p_{2} \succeq _{CX}p_{1}$ (see \cite{light2018precautionary}). Thus, part (ii) of Proposition \ref{Prop Comp Stationary} implies that when the labor income uncertainty increases (i.e.,  $p_{2} \succeq _{CX}p_{1}$), both the highest partial equilibrium (when $R$ is fixed) wealth inequality and the lowest partial equilibrium wealth inequality increase (i.e., $\lambda _{2} \succeq _{ICX}\lambda _{1}$).

\section{\label{Section final}Summary}

This paper studies how the current and future optimal decisions change as a function of the optimization problem's parameters in the context of Markov decision processes. We provide simple sufficient conditions on the primitives of Markov decision processes that ensure comparative statics results and stochastic comparative statics results. 
We show that various models from different areas of operations research and economics satisfy our sufficient conditions.

\section{\label{Section Appendix} Appendix}

\subsection{Proofs of the results in Section \ref{SCS}}

\begin{proof}
[Proof of Theorem~\ref{Theorem 1}]
For $t =1$ the result is trivial since $\mu _{2}^{1} =\mu _{1}^{1}$. Assume that  $\mu _{2}^{t} \succeq _{D}\mu _{1}^{t}$ for some $t \in \mathbb{N}$. First note that for every measurable function $f :S \rightarrow \mathbb{R}$ and $i =1 ,2$ we have
\begin{equation}\int _{S}f(s^{ \prime })\mu _{i}^{t +1}(ds^{ \prime }) =\int _{S}\int _{S}f(s^{ \prime })P_{i}(s ,ds^{ \prime })\mu _{i}^{t}(ds) . \label{(1)}
\end{equation}To see this, assume first that $f =1_{B}$ where $B \in \mathcal{B}(S)$ and $1$ is the indicator function of the set $B$. We have 
\begin{align*}\int _{S}f(s^{ \prime })\mu _{i}^{t +1}(ds^{ \prime }) &  =\mu _{i}^{t +1}(B) \\
 &  =\int _{S}p_{i}(s ,g(s ,e_{i}) ,B)\mu _{i}^{t}(ds) \\
 &  =\int _{S}\int _{S}1_{B}(s^{ \prime })p_{i}(s ,g(s ,e_{i}) ,ds^{ \prime })\mu _{i}^{t}(ds) \\
 &  =\int _{S}\int _{S}f(s^{ \prime })P_{i}(s ,ds^{ \prime })\mu _{i}^{t}(ds) .\end{align*} A standard argument shows that equality (\ref{(1)}) holds for every measurable function $f$.

Now assume that $f \in D$. We have 
\begin{align*}\int _{S}f(s^{ \prime })\mu _{2}^{t +1}(ds^{ \prime }) 
&  =\int _{S}\int _{S}f(s^{ \prime })P_{2}(s ,ds^{ \prime })\mu _{2}^{t}(ds)  \\
&  \geq \int _{S}\int _{S}f(s^{ \prime })P_{2}(s ,ds^{ \prime })\mu _{1}^{t}(ds) \\
 &  \geq \int _{S}\int _{S}f(s^{ \prime })P_{1}(s ,ds^{ \prime })\mu _{1}^{t}(ds) \\
 & = \int _{S}f(s^{ \prime })\mu _{1}^{t +1}(ds^{ \prime }) .\end{align*}The first inequality follows since $f \in D$, $P_{2}$ is $D$-preserving and $\mu _{2}^{t} \succeq _{D}\mu _{1}^{t}$ . The second inequality follows since $P_{2}(s , \cdot ) \succeq _{D}P_{1}(s , \cdot )$. Thus, $\mu _{2}^{t +1} \succeq _{D}\mu _{1}^{t +1}$. We conclude that $\mu _{2}^{t} \succeq _{D}\mu _{1}^{t}$ for all $t \in \mathbb{N}$.
\end{proof}

\begin{proof}
[Proof of Corollary \ref{Parameter}]
We show that $P_{2}$ is $I$-preserving and that $P_{2}(s , \cdot ) \succeq _{st}P_{1}(s , \cdot )$ for all $s \in S$. Let $f :S \rightarrow \mathbb{R}$ be an increasing function and let $e_{2} \succeq e_{1}$. 

Since $p$ is monotone and $g(s ,e_{2})$ is increasing in $s$, if $s_{2} \geq s_{1}$ then \begin{equation*}\int _{S}f(s^{ \prime })p(s_{2} ,g(s_{2} ,e_{2}) ,ds^{ \prime }) \geq \int _{S}f(s^{ \prime })p(s_{1} ,g(s_{1} ,e_{2}) ,ds^{ \prime }) .
\end{equation*}Thus, $P_{2}$ is $I$-preserving. 

Let $s \in S$. Since $g(s ,e_{2}) \geq g(s ,e_{1})$ and $p$ is monotone, we have \begin{equation*}\int _{S}f(s^{ \prime })p(s ,g(s ,e_{2}) ,ds^{ \prime }) \geq \int _{S}f(s^{ \prime })p(s ,g(s ,e_{1}) ,ds^{ \prime }) .
\end{equation*}Thus, $P_{2}(s , \cdot ) \succeq _{st}P_{1}(s , \cdot )$.

From Theorem \ref{Theorem 1} we conclude that $\mu _{2}^{t} \succeq _{st}\mu _{1}^{t}$ for all $t \in \mathbb{N}$. We have \begin{equation*}\int _{S}g(s ,e_{2})\mu _{2}^{t}(ds) \geq \int _{S}g(s ,e_{2})\mu _{1}^{t}(ds) \geq \int_{S}g(s ,e_{1})\mu _{1}^{t}(ds) ,
\end{equation*} which proves the Corollary.
\end{proof}

\begin{proof}
[Proof of Theorem \ref{TRANSITION}]
(i) Assume that $p_{2} \succeq _{st}p_{1}$. We show that $P_{2}$ is $I$-preserving and that $P_{2}(s , \cdot ) \succeq _{st}P_{1}(s , \cdot )$ for all $s \in S$. Let $f :S \rightarrow \mathbb{R}$ be an increasing  function. 

Assume that $s_{2} \geq s_{1}$. Since $g(s_{2} ,p_{2}) \geq g(s_{1} ,p_{2})$ and $p_{2}$ is monotone we have \begin{equation*}\int _{S}f(s^{ \prime })p_{2}(s_{2} ,g(s_{2} ,p_{2}) ,ds^{ \prime }) \geq \int _{S}f(s^{ \prime })p_{2}(s_{1} ,g(s_{1} ,p_{2}) ,ds^{ \prime }),
\end{equation*} which proves that $P_{2}$ is $I$-preserving. 

Let $s \in S$. Since $p_{2}$ is monotone, $g(s ,p_{2}) \geq g(s ,p_{1})$ for all $s \in S$, and $p_{2} \succeq _{st}p_{1}$ we have 
\begin{align*}\int _{S}f(s^{ \prime })p_{2}(s ,g(s ,p_{2}) ,s ,ds^{ \prime }) &  \geq \int _{S}f(s^{ \prime })p_{2}(s ,g(s ,p_{1}) ,ds^{ \prime }) \\
 &  \geq \int _{S}f(s^{ \prime })p_{1}(s ,g(s ,p_{1}) ,ds^{ \prime }),\end{align*} which proves that $P_{2}(s , \cdot ) \succeq _{st}P_{1}(s , \cdot )$ for all $s \in S$.  

From Theorem \ref{Theorem 1} we conclude that $\mu _{2}^{t} \succeq _{st}\mu _{1}^{t}$ for all $t \in \mathbb{N}$. Since $g(s ,p_{2})$ is increasing, we have \begin{equation*}\int _{S}g(s ,p_{2})\mu _{2}^{t}(ds) \geq \int _{S}g(s ,p_{2})\mu _{1}^{t}(ds) \geq \int g(s ,p_{1})\mu _{1}^{t}(ds) ,
\end{equation*}which proves part (i).

(ii) Assume that $p_{2} \succeq _{CX}p_{1}$. We show that $P_{2}$ is $ICX$-preserving and that $P_{2}(s , \cdot ) \succeq _{ICX}P_{1}(s , \cdot )$ for all $s \in S$.

  Let $f :S \rightarrow \mathbb{R}$ be an increasing and convex function. Let $s_{1} ,s_{2} \in S$ and $s_{\lambda } =\lambda s_{1} +(1 -\lambda )s_{2}$ for $0 \leq \lambda  \leq 1$. We have
\begin{align*}\lambda \int _{S}f(s^{ \prime })p_{2}(s_{1},g(s_{1} ,p_{2}) ,ds^{ \prime }) &  +(1 -\lambda )\int _{S}f(s^{ \prime })p_{2}(s_{2} ,g(s_{2} ,p_{2}) ,ds^{ \prime }) \\
 &  \geq \int _{S}f(s^{ \prime })p_{2}(s_{\lambda } ,\lambda g(s_{1} ,p_{2}) +(1 -\lambda )g(s_{2} ,p_{2}) ,ds^{ \prime }) \\
 &  \geq \int _{S}f(s^{ \prime })p_{2}(s_{\lambda } ,g(s_{\lambda } ,p_{2}) ,ds^{ \prime }) .\end{align*}The first inequality follows since $p_{2}$ is convexity-preserving. The second inequality follows since $g(s ,p_{2})$ is convex and $p_{2}$ is monotone. Thus, $\int _{S}f(s^{ \prime })P_{2}(s ,ds^{ \prime })$ is convex. Part (i) shows that $\int _{S}f(s^{ \prime })P_{2}(s ,ds^{ \prime })$ is increasing. We conclude that $P_{2}$ is $ICX$-preserving.

Fix $s \in S$. We have 
\begin{align*}\int _{S}f(s^{ \prime })p_{2}(s ,g(s ,p_{2}) ,ds^{ \prime }) \geq \int _{S}f(s^{ \prime })p_{2}(s ,g(s ,p_{1}) ,ds^{ \prime }) \\
 \geq \int _{S}f(s^{ \prime })p_{1}(s ,g(s ,p_{1}) ,ds^{ \prime }) .\end{align*} The first inequality follows since $g(s ,p_{2}) \geq g(s ,p_{1})$ and $p_{2}$ is monotone. The second inequality follows since $p_{2} \succeq _{CX}p_{1}$. We conclude that $P_{2}(s , \cdot ) \succeq _{ICX}P_{1}(s , \cdot )$. 

From Theorem \ref{Theorem 1} we conclude that $\mu _{2}^{t} \succeq _{ICX}\mu _{1}^{t}$ for all $t \in \mathbb{N}$. Since $g(s ,p_{2})$ is increasing and convex, we have \begin{equation*}\int _{S}g(s ,p_{2})\mu _{2}^{t}(ds) \geq \int _{S}g(s ,p_{2})\mu _{1}^{t}(ds) \geq \int g(s ,p_{1})\mu _{1}^{t}(ds) ,
\end{equation*}which proves part (ii).    
\end{proof}

\subsection{Proofs of the results in Section \ref{Section: discount}}
In order to prove Theorem \ref{Thorem DISCOUNT} we need the following two results: 

\begin{proposition}
\label{TOPKIS} Suppose that Assumption \ref{Ass Topkis} holds. Then 

(i) $h(s ,a ,f)$ has increasing differences  whenever $f$ is an increasing function. 

(ii) $G(s)$ is ascending. In particular, $g(s) =\max G(s)$ is an increasing function. 

(iii) $Tf(s) =\max _{a \in \Gamma (s)}h(s ,a ,f)$ is an increasing function whenever $f$ is an increasing function.
$V(s)$ is an increasing function.  
\end{proposition}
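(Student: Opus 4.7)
The plan is to prove the three parts in the order (i), (iii), (ii): part (iii) uses (i) applied to a constant increasing function (via iteration), and part (ii) uses (i) applied to the value function $V$, whose monotonicity is established in (iii).

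For part (i), decompose $h(s,a,f) = r(s,a) + \beta \int_S f(s')p(s,a,ds')$. The single-period term $r$ has increasing differences in $(s,a)$ directly from Assumption \ref{Ass Topkis}(i), so it suffices to establish the same for the expected continuation value $\int f\,dp$. The key step is a layer-cake representation: for every bounded increasing $f$ on $S$ one can write $f$ (up to an additive constant) as a nonnegative integral of indicators $1_{\{f > t\}}$ over $t$, and each level set $\{s' : f(s') > t\}$ is an upper set of $S$ precisely because $f$ is increasing. By Assumption \ref{Ass Topkis}(ii), $p(s,a,B)$ has increasing differences in $(s,a)$ for every upper set $B$; since the class of functions with increasing differences in $(s,a)$ is closed under nonnegative linear combinations and (positive) integration, it follows that $\int f(s')p(s,a,ds')$ has increasing differences in $(s,a)$, and adding $r$ finishes part (i).

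For part (iii), I would verify directly that $T$ maps increasing functions to increasing functions. Fix an increasing $f$: for each $a$, $r(s,a)$ is increasing in $s$ by Assumption (i) and $\int f(s')p(s,a,ds')$ is increasing in $s$ by monotonicity of $p$ in Assumption (ii), so $h(s,a,f)$ is increasing in $s$ for each fixed $a$. Combined with $\Gamma(s_1) \subseteq \Gamma(s_2)$ whenever $s_1 \leq s_2$ (Assumption (iii)), maximizing a pointwise-larger function over a larger feasible set yields $Tf(s_2) \geq Tf(s_1)$. Picking any increasing $f_0 \in B(S)$ (a constant will do), one iterates to see that $T^n f_0$ is increasing for every $n$, and the paper's standing assumption that $T^n f_0 \to V$ uniformly then forces $V$ to be increasing, as a uniform limit of increasing functions.

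With (iii) in hand, part (ii) is a direct application of Topkis's monotonicity theorem: since $V$ is increasing, part (i) applied to $f = V$ gives that $h(s,a,V)$ has increasing differences in $(s,a)$ on $S \times A$, and together with the ascending constraint correspondence $\Gamma$ this yields that $G(s) = \arg\max_{a \in \Gamma(s)} h(s,a,V)$ is ascending in $s$, and consequently $g(s) = \max G(s)$ is increasing. The main technical obstacle is the passage in part (i) from the hypothesis on upper sets (stochastically increasing differences) to general increasing $f$; the layer-cake decomposition, combined with the fact that increasing differences is preserved under nonnegative integration, is the mechanism that makes this routine.
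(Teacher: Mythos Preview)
The paper does not actually prove this proposition; it simply cites Theorem 3.9.2 in \cite{topkis2011supermodularity}. Your proposal reconstructs the standard argument behind that result, and the overall architecture is correct: the layer-cake representation in part (i) is precisely the device that transfers the stochastically-increasing-differences hypothesis on upper sets to all increasing $f$; the preservation-under-$T$ and uniform-limit argument in (iii) is the right way to get monotonicity of $V$; and invoking Topkis's monotonicity theorem in (ii) with $f=V$ is the intended closing step.

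One point needs tightening. In part (ii) you appeal to ``the ascending constraint correspondence $\Gamma$,'' but Assumption~\ref{Ass Topkis}(iii) only gives the containment $\Gamma(s_1)\subseteq\Gamma(s_2)$ for $s_1\le s_2$, which is strictly weaker than $\Gamma$ being ascending in the strong set order. Topkis's theorem needs the latter to conclude that $G$ is ascending: with containment alone, if $b\in G(s_1)$, $b'\in G(s_2)$ and $b'<b$, there is no guarantee that $b'\in\Gamma(s_1)$, so the usual increasing-differences swap argument breaks down. The paper is itself loose on this point---note that Theorem~\ref{Thorem DISCOUNT} explicitly adds ``$\Gamma(s)$ is ascending'' as a hypothesis on top of Assumption~\ref{Ass Topkis}, and the cited Theorem 3.9.2 carries the corresponding lattice condition on the constraint sets. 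So your argument is correct provided you read Proposition~\ref{TOPKIS}(ii) as implicitly assuming $\Gamma$ ascending; just do not claim that this follows from Assumption~\ref{Ass Topkis} alone.
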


\begin{proof} 
See Theorem 3.9.2 in \cite{topkis2011supermodularity}. 
\end{proof}

\begin{proposition}
\label{LOVEJOY}Let $(E , \succeq )$ be a partially ordered set. Assume that $\Gamma(s)$ is ascending. If $h(s ,a ,e ,f)$ has increasing differences in $(s ,a)$, $(s ,e)$, and $(a ,e)$, then \begin{equation*}Tf(s ,e) =\max _{a \in \Gamma (s)}h(s ,a ,e ,f)
\end{equation*} has increasing differences in $(s ,e)$.    
\end{proposition}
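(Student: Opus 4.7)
The plan is to prove the statement by a direct exchange argument. Fix $s_1 \leq s_2$ in $S$ and $e_1 \preceq e_2$ in $E$, and select maximizers $a^{*} \in \arg\max_{a \in \Gamma(s_1)} h(s_1,a,e_2,f)$ and $b^{*} \in \arg\max_{a \in \Gamma(s_2)} h(s_2,a,e_1,f)$, so that $Tf(s_1,e_2) = h(s_1,a^{*},e_2,f)$ and $Tf(s_2,e_1) = h(s_2,b^{*},e_1,f)$. Because $\Gamma$ is ascending, $\min(a^{*},b^{*}) \in \Gamma(s_1)$ and $\max(a^{*},b^{*}) \in \Gamma(s_2)$, so by optimality
\begin{equation*}
Tf(s_1,e_1) \geq h(s_1,\min(a^{*},b^{*}),e_1,f), \qquad Tf(s_2,e_2) \geq h(s_2,\max(a^{*},b^{*}),e_2,f).
\end{equation*}
The target inequality is the equivalent form $Tf(s_2,e_2) + Tf(s_1,e_1) \geq Tf(s_1,e_2) + Tf(s_2,e_1)$.

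I would then split into two cases based on the order of $a^{*}$ and $b^{*}$. In the case $a^{*} \leq b^{*}$, the two lower bounds become $h(s_2,b^{*},e_2,f)$ and $h(s_1,a^{*},e_1,f)$, and it suffices to show
\begin{equation*}
h(s_2,b^{*},e_2,f) - h(s_2,b^{*},e_1,f) \geq h(s_1,a^{*},e_2,f) - h(s_1,a^{*},e_1,f),
\end{equation*}
which chains increasing differences in $(a,e)$ at $s=s_1$ (using $a^{*} \leq b^{*}$) with increasing differences in $(s,e)$ at $a=b^{*}$ (using $s_1 \leq s_2$). In the case $a^{*} > b^{*}$, the lower bounds become $h(s_2,a^{*},e_2,f)$ and $h(s_1,b^{*},e_1,f)$, and the required inequality reduces to
\begin{equation*}
h(s_2,a^{*},e_2,f) - h(s_1,a^{*},e_2,f) \geq h(s_2,b^{*},e_1,f) - h(s_1,b^{*},e_1,f),
\end{equation*}
which chains increasing differences in $(s,e)$ at $a=a^{*}$ with increasing differences in $(s,a)$ at $e=e_1$ (using $a^{*} > b^{*}$).

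The main subtlety is that the three increasing-differences hypotheses feel redundant but are in fact each used: the $(a,e)$ and $(s,a)$ hypotheses play symmetric roles in the two branches of the case split, while increasing differences in $(s,e)$ is used in both branches to transfer the jump in $e$ from one $s$-level to the other. A further point worth flagging but not laboring is the existence of the maximizers $a^{*},b^{*}$: this is not stated in the hypothesis of Proposition~\ref{LOVEJOY} itself but follows under the standard upper hemicontinuity of $\Gamma$ and continuity assumptions already imposed on the MDP in Section~\ref{Section MODEL}, so the argument above applies to $Tf$ whenever it is well-defined as a maximum.
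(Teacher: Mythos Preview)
Your argument is correct. The paper does not actually supply a proof of Proposition~\ref{LOVEJOY}; it simply refers the reader to Lemma~1 in \cite{hopenhayn1992stochastic} and Lemma~2 in \cite{lovejoy1987ordered}. Your direct exchange argument --- selecting the cross-maximizers $a^{*}$ and $b^{*}$, using the ascending property of $\Gamma$ to make $\min(a^{*},b^{*})$ and $\max(a^{*},b^{*})$ feasible at $s_{1}$ and $s_{2}$ respectively, and then chaining the three increasing-differences hypotheses through the two cases --- is the standard proof of this lemma and is self-contained. Your observation that each of the three hypotheses is genuinely needed (one per branch plus the $(s,e)$ hypothesis in both) is also accurate and worth recording.
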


\begin{proof}
See Lemma 1 in \cite{hopenhayn1992stochastic} or Lemma 2 in \cite{lovejoy1987ordered}. 
\end{proof}

\begin{proof}
[Proof of Theorem \ref{Thorem DISCOUNT}]
(i) Let $E =(0 ,1)$ be the set of all possible discount factors, endowed with the standard order: $\beta _{2} \geq \beta _{1}$ if $\beta _{2}$ is greater than or equal to $\beta _{1}$. Assume that $\beta _{1} \leq \beta _{2}$. Let $f \in B(S \times E)$ and assume that $f$ has increasing differences in $(s ,\beta )$ and is increasing in $s$. Let $a_{2} \geq a_{1}$. Since $f$ has increasing differences, the function $f(s ,\beta _{2}) -f(s ,\beta _{1})$ is increasing in $s$. Since $p$ is monotone we have\begin{equation*}\int _{S}(f(s^{ \prime } ,\beta _{2}) -f(s^{ \prime } ,\beta _{1}))p(s ,a_{2} ,ds^{ \prime }) \geq \int _{S}(f(s^{ \prime } ,\beta _{2}) -f(s^{ \prime } ,\beta _{1}))p(s ,a_{1} ,ds^{ \prime }) .
\end{equation*}Rearranging the last inequality yields \begin{equation*}\int _{S}f(s^{ \prime } ,\beta _{2})p(s ,a_{2} ,ds^{ \prime }) -\int _{S}f(s^{ \prime } ,\beta _{2})p(s ,a_{1} ,ds^{ \prime }) \geq \int _{S}f(s^{ \prime } ,\beta _{1})p(s ,a_{2} ,ds^{ \prime }) -\int _{S}f(s^{ \prime } ,\beta _{1})p(s ,a_{1} ,ds^{ \prime }) .
\end{equation*}Since $f$ is increasing in $s$ and $p$ is monotone, the right-hand-side and the left-hand-side of the last inequality are nonnegative. Thus, multiplying the left-hand-side of the last inequality by $\beta _{2}$ and the right-hand-side of the last inequality by $\beta _{1}$ preserves the inequality. Adding to each side of the last inequality $r(a_{2} ,s) -r(a_{1} ,s)$ yields \begin{equation*}h(s ,a_{2} ,\beta _{2} ,f) -h(s ,a_{1} ,\beta _{2} ,f) \geq h(s ,a_{2} ,\beta _{1} ,f) -h(s ,a_{1} ,\beta _{1} ,f) .
\end{equation*}That is, $h$ has increasing differences in $(a ,\beta )$. An analogous argument shows that $h$ has increasing differences in $(s ,\beta )$. Proposition \ref{TOPKIS} guarantees that $h$ has increasing differences in $(s ,a)$ and that $Tf$ is increasing in $s$. 

Proposition \ref{LOVEJOY} implies that $Tf$ has increasing differences. We conclude that for all $n =1 ,2 ,3....$, $T^{n}f$ has increasing differences and is increasing in $s$. From standard dynamic programming arguments, $T^{n}f$ converges uniformly to $V$. Since the set of functions that has increasing differences and is increasing in $s$ is closed under uniform convergence, $V$ has increasing differences and is increasing in $s$. From the same argument as above, $h(s ,a ,\beta  ,V)$ has increasing differences in $(a ,\beta )$. Theorem 6.1 in \cite{topkis1978minimizing} implies that $g(s ,\beta )$ is increasing in $\beta $ for all $s \in S$. Proposition \ref{TOPKIS} implies that $g(s ,\beta )$ is increasing in $s$ for all $\beta  \in E$. We now apply Corollary \ref{Parameter} to conclude that $\mathbb{E}_{2}^{t}(g(\beta _{2})) \geq \mathbb{E}_{1}^{t}(g(\beta _{1}))$ for all $t \in \mathbb{N}$.

(ii) The proof is similar to the proof of part (i) and is therefore omitted.
\end{proof}

\subsection{Proofs of the results in Section \ref{Section: transition}}

\begin{proof}
[Proof of Theorem \ref{Theorem Transition}]
Suppose that the function $f \in B(S \times E_{p})$ is convex and increasing in $s$, and has increasing differences where $E_{p}$ is endowed with the stochastic dominance order $ \succeq _{st}$. Let $v_{2} \succeq _{st}v_{1}$.

Note that $m$ has increasing differences in $(s ,a)$, $(s ,\epsilon )$ and $(a ,\epsilon )$ if and only if $m$ is supermodular (see Theorem 3.2 in \cite{topkis1978minimizing}).  

From the fact that the composition of a convex and increasing function with a convex, increasing and supermodular function is convex and supermodular (see \cite{topkis2011supermodularity}) the function $f(m(s ,a ,\epsilon ) ,p_{2})$ is convex and supermodular in $(s ,a)$ for all $\epsilon  \in \mathcal{V}$. Since convexity and supermodularity are preserved under integration, the function $\int f(m(s ,a ,\epsilon ) ,p_{2})v_{2}(d\epsilon )$ is convex and supermodular  in $(s ,a)$. Thus, \begin{equation}h(s ,a ,p_{2} ,f) =r(s ,a) +\beta \int _{\mathcal{V}}f(m(s ,a ,\epsilon ) ,p_{2})v_{2}(d\epsilon )
\end{equation}is convex and supermodular in $(s ,a)$ as the sum of convex and supermodular functions. This implies that $Tf(s ,p_{2}) =\max _{a \in \Gamma (s)}h(s ,a ,p_{2} ,f)$ is convex. Since $h$ is increasing in $s$ it follows that $Tf(s ,p_{2})$ is increasing in $s$. 

 Note that for any increasing function $\overline{f} :S \rightarrow \mathbb{R}$ we have \begin{equation*}\int _{S}\overline{f}(s^{ \prime })p_{2}(s ,a ,ds^{ \prime }) =\int _{\mathcal{V}}\overline{f}(m(s ,a ,\epsilon ))v_{2}(d\epsilon ) \geq \int _{\mathcal{V}}\overline{f}(m(s ,a ,\epsilon ))v_{1}(d\epsilon ) =\int _{S}\overline{f}(s^{ \prime })p_{1}(s ,a ,ds^{ \prime }),
\end{equation*}
so $p_{2} \succeq _{st}p_{1}$. 

Fix $a \in A$, and let $s_{2} \geq s_{1}$. Since $f(m(s ,a ,\epsilon ) ,p_{2})$ is supermodular in $(s ,\epsilon )$, the function $f(m(s_{2} ,a ,\epsilon ) ,p_{2}) -f(m(s_{1} ,a ,\epsilon ) ,p_{2})$ is increasing in $\epsilon $. 
We have
\begin{align*}\int _{\mathcal{V}}(f(m(s_{2} ,a ,\epsilon ) ,p_{2}) -f(m(s_{1} ,a ,\epsilon ) ,p_{2}))v_{2}(d\epsilon ) &  \geq \int _{\mathcal{V}}(f(m(s_{2} ,a ,\epsilon ) ,p_{2}) -f(m(s_{1} ,a ,\epsilon ) ,p_{2}))v_{1}(d\epsilon ) \\
 &  \geq \int _{\mathcal{V}}(f(m(s_{2} ,a ,\epsilon ) ,p_{1}) -f(m(s_{1} ,a ,\epsilon ) ,p_{1}))v_{1}(d\epsilon ).\end{align*}The first inequality follows since $v_{2} \succeq _{st}v_{1}$. The second inequality follows from the facts that $m$ is increasing in $s$ and $f$ has increasing differences. Adding $r(s_{2} ,a) -r(s_{1} ,a)$ to each side of the last inequality implies that $h$ has increasing differences in $(s ,p)$. Similarly, we can show that $h$ has increasing differences in $(a ,p)$.   

Proposition \ref{LOVEJOY} implies that $Tf$ has increasing differences. We conclude that for all $n =1 ,2 ,3....$, $T^{n}f$ is convex and increasing in $s$ and has increasing differences. From standard dynamic programming arguments, $T^{n}f$ converges uniformly to $V$. Since the set of functions that have increasing differences and are convex and increasing in $s$ is closed under uniform convergence, $V$ has increasing differences and is convex and increasing in $s$. From the same argument as above, $h(s ,a ,p ,V)$ has increasing differences in $(a ,p)$ and $(s ,a)$. An application of Theorem 6.1 in \cite{topkis1978minimizing} implies that $g(s ,p_{2}) \geq g(s ,p_{1})$ for all $s \in S$ and $g(s,p_{2})$ is increasing in $s$. The fact that $m$ is increasing implies that $p$ is monotone. We now apply Corollary \ref{Parameter} to conclude that $\mathbb{E}_{2}^{t}(g(p_{2})) \geq \mathbb{E}_{1}^{t}(g(p_{1}))$ for all $t \in \mathbb{N}$.
\end{proof}

\subsection{Proofs of the results in Sections \ref{Section Popescu} and \ref{Section 3.3}}
\begin{proof}
[Proof of Proposition \ref{POPESCU}]
(i) Let $f \in B(S)$ be a convex function. The facts that $D(s ,a)$ is convex in $s$ and that convexity is preserved under integration imply that the function $aD(s ,a) +\beta \int f(\gamma s +(1 -\gamma )a)v(d\gamma )$ is convex in $s$. Thus, the function $Tf(s)$ given by 
\begin{equation*}Tf(s) =\max _{a \in A}aD(s ,a) +\beta \int f(\gamma s +(1 -\gamma )a)v(d\gamma )
\end{equation*} is convex in $s$. A standard dynamic programming argument (see the proof of Proposition \ref{prop: Dist}) shows that the value function $V$ is convex. The convexity of $V$ implies that for all $\gamma $, the function $V(\gamma s +(1 -\gamma )a)$ has increasing differences in $(s ,a)$. Since increasing differences are preserved under integration, $\int _{0}^{1}V(\gamma s +(1 -\gamma )a)v(d\gamma )$ has increasing differences in $(s ,a)$. Since $D(s ,a)$ is nonnegative and has increasing differences, the function $aD(s ,a)$ has increasing differences. Thus, the function \begin{equation*}aD(s ,a) +\beta \int _{0}^{1}V(\gamma s +(1 -\gamma )a)v(d\gamma )
\end{equation*}has increasing differences as the sum of functions with increasing differences. Now apply Theorem 6.1 in \cite{topkis1978minimizing} to conclude that $g(s)$ is increasing. 

(ii) Follows from Corollary \ref{Parameter}. 

(iii) Follows from a similar argument to the arguments in the proof of Theorem \ref{Thorem DISCOUNT}.  
\end{proof}

We now introduce some notations and a result that is needed in order to prove Proposition \ref{Prop Comp Stationary}. 
Recall that a partially ordered set $(Z , \geq )$ is said to be a lattice if for all $x ,y \in Z$, $\sup \{x ,y\}$ and $\inf \{y ,x\}$ exist in $Z$. $(Z , \geq )$ is a complete lattice if for all non-empty subsets $Z^{ \prime } \subseteq Z$ the elements $\sup Z^{ \prime }$ and $\inf Z^{ \prime }$ exist in $Z$. We need the following Proposition regarding the comparison of fixed points. For a proof, see Corollary 2.5.2 in \cite{topkis2011supermodularity}.

\begin{proposition}
\label{Topkis Fixed point}Suppose that $Z$ is a nonempty complete lattice, $E$ is a partially ordered set, and $f (z ,e)$ is an increasing function from $Z \times E$ into $Z$. Then the greatest and least fixed points 
of $f (z ,e)$ exist and are increasing in $e$ on $E$. 
\end{proposition}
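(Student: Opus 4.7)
The plan is to prove this as a parametric Tarski-style fixed point result in two stages: first establish existence of the greatest and least fixed points of $f(\cdot,e)$ for each fixed $e$, and then leverage parameter monotonicity to conclude that these extremal fixed points inherit the order on $E$.

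For existence, fix $e \in E$ and consider the ``post-fixed'' set $A(e) = \{z \in Z : z \leq f(z,e)\}$. Since $Z$ is a complete lattice, $\bar{z}(e) := \sup A(e)$ exists. For any $z \in A(e)$, monotonicity of $f(\cdot,e)$ gives $z \leq f(z,e) \leq f(\bar{z}(e),e)$, so $f(\bar{z}(e),e)$ is an upper bound of $A(e)$ and hence $\bar{z}(e) \leq f(\bar{z}(e),e)$, i.e., $\bar{z}(e) \in A(e)$. Applying $f(\cdot,e)$ to this inequality shows $f(\bar{z}(e),e) \in A(e)$, whence $f(\bar{z}(e),e) \leq \bar{z}(e)$, so $\bar{z}(e)$ is a fixed point. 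Any fixed point $z^{*}$ satisfies $z^{*} \leq f(z^{*},e)$, so $z^{*} \in A(e)$ and $z^{*} \leq \bar{z}(e)$; thus $\bar{z}(e)$ is the greatest fixed point. A symmetric argument using $B(e) = \{z : f(z,e) \leq z\}$ and $\underline{z}(e) := \inf B(e)$ yields the least fixed point.

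For monotonicity in the parameter, let $e_{1} \preceq e_{2}$ in $E$. For the greatest fixed points, parameter monotonicity of $f$ gives
\begin{equation*}
\bar{z}(e_{1}) = f(\bar{z}(e_{1}),e_{1}) \leq f(\bar{z}(e_{1}),e_{2}),
\end{equation*}
so $\bar{z}(e_{1}) \in A(e_{2})$, and therefore $\bar{z}(e_{1}) \leq \sup A(e_{2}) = \bar{z}(e_{2})$. For the least fixed points,
\begin{equation*}
\underline{z}(e_{2}) = f(\underline{z}(e_{2}),e_{2}) \geq f(\underline{z}(e_{2}),e_{1}),
\end{equation*}
so $\underline{z}(e_{2}) \in B(e_{1})$, giving $\underline{z}(e_{1}) = \inf B(e_{1}) \leq \underline{z}(e_{2})$.

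There is no real obstacle: the argument is essentially Tarski's classical proof plus a one-line comparison between the sets $A(e_{1})$ and $A(e_{2})$ (resp.\ $B$). The only point that requires care is verifying that the extremal fixed points at one parameter value indeed land in the appropriate ``post-fixed'' or ``pre-fixed'' set at the other parameter value, which uses only the joint monotonicity hypothesis on $f$ and the fact that a fixed point at $e_{i}$ automatically satisfies the inequality needed at $e_{j}$ once $f$ is moved in $e$.
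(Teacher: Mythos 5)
Your proof is correct and follows exactly the standard Tarski-style argument behind the result the paper invokes (the paper offers no independent proof, merely citing Corollary 2.5.2 in \cite{topkis2011supermodularity}, whose proof is precisely this: Tarski's fixed-point construction via the post-fixed set, plus the one-line comparison $\bar{z}(e_{1}) \in A(e_{2})$ and dually $\underline{z}(e_{2}) \in B(e_{1})$). One cosmetic point: since the paper defines a complete lattice via suprema and infima of \emph{nonempty} subsets, you should note that $A(e)$ and $B(e)$ are nonempty, which is immediate because $\inf Z \in A(e)$ and $\sup Z \in B(e)$.
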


\begin{proof}
[Proof of Proposition \ref{Prop Comp Stationary}]
Let $\mathcal{P}(S)$ be the set of all probability measures on $S$. The partially ordered set $(\mathcal{P} (S) , \succeq _{st})$ and the partially ordered set $(\mathcal{P} (S) , \succeq _{ICX})$ are complete lattices when $S \subseteq \mathbb{R}$ is compact (see \cite{muller2006stochastic}).  

(i) Define the operator $\Phi  :\mathcal{P}(S) \times E_{p ,i} \rightarrow \mathcal{P}(S)$ by \begin{equation*}\Phi (\lambda  ,p)( \cdot ) =\int _{S}p(s ,g(s ,p) , \cdot )\lambda (ds) .
\end{equation*}The proof of Theorem \ref{TRANSITION} implies that $\Phi $ is an increasing function on $\mathcal{P}(S) \times E_{p ,i}$ with respect to $ \succeq _{st}$. That is, for $p_{1} ,p_{2} \in E_{p ,i}$ and $\lambda _{1} ,\lambda _{2} \in \mathcal{P}(S)$ we have $\Phi (\lambda _{2} ,p_{2}) \succeq _{st}\Phi (\lambda _{1} ,p_{1})$ whenever $p_{2} \succeq _{st}p_{1}$ and $\lambda _{2} \succeq _{st}\lambda _{1}$. Proposition \ref{Topkis Fixed point} implies the result. 

(ii) The proof is analogous to the proof of part (i) and is therefore omitted. 
\end{proof}

\bibliographystyle{ecta}
\bibliography{Comparativedynamics}

\begin{thebibliography}{53}
\newcommand{\enquote}[1]{``#1''}
\expandafter\ifx\csname natexlab\endcsname\relax\def\natexlab#1{#1}\fi

\bibitem[\protect\citeauthoryear{Acemoglu and Jensen}{Acemoglu and
  Jensen}{2015}]{acemoglu2015robust}
\textsc{Acemoglu, D. and M.~K. Jensen} (2015): \enquote{Robust comparative
  statics in large dynamic economies,} \emph{Journal of Political Economy},
  123, 587--640.

\bibitem[\protect\citeauthoryear{Acemoglu and Jensen}{Acemoglu and
  Jensen}{2018}]{acemoglu2018equilibrium}
---\hspace{-.1pt}---\hspace{-.1pt}--- (2018): \enquote{Equilibrium Analysis in
  the Behavioral Neoclassical Growth Model,} \emph{Working Paper}.

\bibitem[\protect\citeauthoryear{Adlakha and Johari}{Adlakha and
  Johari}{2013}]{adlakha2013mean}
\textsc{Adlakha, S. and R.~Johari} (2013): \enquote{Mean field equilibrium in
  dynamic games with strategic complementarities,} \emph{Operations Research},
  61, 971--989.

\bibitem[\protect\citeauthoryear{Aiyagari}{Aiyagari}{1994}]{aiyagari1994uninsured}
\textsc{Aiyagari, S.~R.} (1994): \enquote{Uninsured idiosyncratic risk and
  aggregate saving,} \emph{The Quarterly Journal of Economics}, 109, 659--684.

\bibitem[\protect\citeauthoryear{Amir, Mirman, and Perkins}{Amir
  et~al.}{1991}]{amir1991one}
\textsc{Amir, R., L.~J. Mirman, and W.~R. Perkins} (1991): \enquote{One-sector
  nonclassical optimal growth: optimality conditions and comparative dynamics,}
  \emph{International Economic Review}, 625--644.

\bibitem[\protect\citeauthoryear{Antoniadou}{Antoniadou}{2007}]{antoniadou2007comparative}
\textsc{Antoniadou, E.} (2007): \enquote{Comparative statics for the consumer
  problem,} \emph{Economic Theory}, 31, 189--203.

\bibitem[\protect\citeauthoryear{Athey}{Athey}{2002}]{athey2002monotone}
\textsc{Athey, S.} (2002): \enquote{Monotone comparative statics under
  uncertainty,} \emph{The Quarterly Journal of Economics}, 117, 187--223.

\bibitem[\protect\citeauthoryear{Balbus, Reffett, and Wo{\'z}ny}{Balbus
  et~al.}{2014}]{balbus2014constructive}
\textsc{Balbus, {\L}., K.~Reffett, and {\L}.~Wo{\'z}ny} (2014): \enquote{A
  constructive study of Markov equilibria in stochastic games with strategic
  complementarities,} \emph{Journal of Economic Theory}, 150, 815--840.

\bibitem[\protect\citeauthoryear{Barthel and Sabarwal}{Barthel and
  Sabarwal}{2018}]{barthel2018directional}
\textsc{Barthel, A.-C. and T.~Sabarwal} (2018): \enquote{Directional monotone
  comparative statics,} \emph{Economic Theory}, 66, 557--591.

\bibitem[\protect\citeauthoryear{Bertsekas and Shreve}{Bertsekas and
  Shreve}{1978}]{bertsekas1978stochastic}
\textsc{Bertsekas, D.~P. and S.~E. Shreve} (1978): \emph{Stochastic optimal
  control: The discrete time case}, Academic Press New York.

\bibitem[\protect\citeauthoryear{Bommier and Grand}{Bommier and
  Grand}{2018}]{bommier2018risk}
\textsc{Bommier, A. and F.~L. Grand} (2018): \enquote{Risk aversion and
  precautionary savings in dynamic settings,} \emph{Management Science}.

\bibitem[\protect\citeauthoryear{Dziewulski and Quah}{Dziewulski and
  Quah}{2019}]{l2017supermodular}
\textsc{Dziewulski, P. and J.~Quah} (2019): \enquote{Supermodular
  correspondences and comparison of multi-prior beliefs,} \emph{Working paper}.

\bibitem[\protect\citeauthoryear{Echenique}{Echenique}{2002}]{echenique2002comparative}
\textsc{Echenique, F.} (2002): \enquote{Comparative statics by adaptive
  dynamics and the correspondence principle,} \emph{Econometrica}, 70,
  833--844.

\bibitem[\protect\citeauthoryear{Feinberg}{Feinberg}{1996}]{feinberg1996measurability}
\textsc{Feinberg, E.~A.} (1996): \enquote{On measurability and representation
  of strategic measures in Markov decision processes,} \emph{Lecture
  Notes-Monograph Series}, 29--43.

\bibitem[\protect\citeauthoryear{Feinberg, Kasyanov, and Zgurovsky}{Feinberg
  et~al.}{2016}]{feinberg2016partially}
\textsc{Feinberg, E.~A., P.~O. Kasyanov, and M.~Z. Zgurovsky} (2016):
  \enquote{Partially observable total-cost Markov decision processes with
  weakly continuous transition probabilities,} \emph{Mathematics of Operations
  Research}, 41, 656--681.

\bibitem[\protect\citeauthoryear{Feinberg and Shwartz}{Feinberg and
  Shwartz}{2012}]{feinberg2012handbook}
\textsc{Feinberg, E.~A. and A.~Shwartz} (2012): \emph{Handbook of Markov
  Decision Processes: Methods and Applications}, vol.~40, Springer Science \&
  Business Media.

\bibitem[\protect\citeauthoryear{Foss, Shneer, Thomas, and Worrall}{Foss
  et~al.}{2018}]{foss2018stochastic}
\textsc{Foss, S., V.~Shneer, J.~P. Thomas, and T.~Worrall} (2018):
  \enquote{Stochastic stability of monotone economies in regenerative
  environments,} \emph{Journal of Economic Theory}, 173, 334--360.

\bibitem[\protect\citeauthoryear{Hinderer, Rieder, and Stieglitz}{Hinderer
  et~al.}{2016}]{hinderer2016dynamic}
\textsc{Hinderer, K., U.~Rieder, and M.~Stieglitz} (2016): \emph{Dynamic
  optimization}, Springer.

\bibitem[\protect\citeauthoryear{Hopenhayn}{Hopenhayn}{1992}]{hopenhayn1992entry}
\textsc{Hopenhayn, H.~A.} (1992): \enquote{Entry, exit, and firm dynamics in
  long run equilibrium,} \emph{Econometrica}, 60, 1127--1150.

\bibitem[\protect\citeauthoryear{Hopenhayn and Prescott}{Hopenhayn and
  Prescott}{1992}]{hopenhayn1992stochastic}
\textsc{Hopenhayn, H.~A. and E.~C. Prescott} (1992): \enquote{Stochastic
  monotonicity and stationary distributions for dynamic economies,}
  \emph{Econometrica}, 60, 1387--1406.

\bibitem[\protect\citeauthoryear{Huggett}{Huggett}{1993}]{huggett1993risk}
\textsc{Huggett, M.} (1993): \enquote{The risk-free rate in heterogeneous-agent
  incomplete-insurance economies,} \emph{Journal of Economic Dynamics and
  Control}, 17, 953--969.

\bibitem[\protect\citeauthoryear{Huggett}{Huggett}{2004}]{huggett2004precautionary}
---\hspace{-.1pt}---\hspace{-.1pt}--- (2004): \enquote{Precautionary wealth
  accumulation,} \emph{The Review of Economic Studies}, 71, 769--781.

\bibitem[\protect\citeauthoryear{Jensen}{Jensen}{2017}]{jensen2017distributional}
\textsc{Jensen, M.~K.} (2017): \enquote{Distributional comparative statics,}
  \emph{The Review of Economic Studies}, 85, 581--610.

\bibitem[\protect\citeauthoryear{Kamihigashi and Stachurski}{Kamihigashi and
  Stachurski}{2014}]{kamihigashi2014stochastic}
\textsc{Kamihigashi, T. and J.~Stachurski} (2014): \enquote{Stochastic
  stability in monotone economies,} \emph{Theoretical Economics}, 9, 383--407.

\bibitem[\protect\citeauthoryear{Koch}{Koch}{2019}]{koch2019index}
\textsc{Koch, C.~M.} (2019): \enquote{Index-wise comparative statics,}
  \emph{Mathematical Social Sciences}, 102, 35--41.

\bibitem[\protect\citeauthoryear{Krishnamurthy}{Krishnamurthy}{2016}]{krishnamurthy2016partially}
\textsc{Krishnamurthy, V.} (2016): \emph{Partially Observed Markov Decision
  Processes}, Cambridge University Press.

\bibitem[\protect\citeauthoryear{Krishnan and Winter}{Krishnan and
  Winter}{2010}]{krishnan2010inventory}
\textsc{Krishnan, H. and R.~A. Winter} (2010): \enquote{Inventory dynamics and
  supply chain coordination,} \emph{Management Science}, 56, 141--147.

\bibitem[\protect\citeauthoryear{Lehrer and Light}{Lehrer and
  Light}{2018}]{lehrer2018effect}
\textsc{Lehrer, E. and B.~Light} (2018): \enquote{The effect of interest rates
  on consumption in an income fluctuation problem,} \emph{Journal of Economic
  Dynamics and Control}, 94, 63--71.

\bibitem[\protect\citeauthoryear{LiCalzi and Veinott}{LiCalzi and
  Veinott}{1992}]{licalzi1992subextremal}
\textsc{LiCalzi, M. and A.~F. Veinott} (1992): \enquote{Subextremal functions
  and lattice programming,} \emph{Working Paper}.

\bibitem[\protect\citeauthoryear{Light}{Light}{2018{\natexlab{a}}}]{light2018precautionary}
\textsc{Light, B.} (2018{\natexlab{a}}): \enquote{Precautionary saving in a
  Markovian earnings environment,} \emph{Review of Economic Dynamics}, 29,
  138--147.

\bibitem[\protect\citeauthoryear{Light}{Light}{2018{\natexlab{b}}}]{light2017uniqueness}
---\hspace{-.1pt}---\hspace{-.1pt}--- (2018{\natexlab{b}}): \enquote{Uniqueness
  of equilibrium in a Bewley-Aiyagari model,} \emph{Working paper}.

\bibitem[\protect\citeauthoryear{Light and Weintraub}{Light and
  Weintraub}{2019}]{light2018mean}
\textsc{Light, B. and G.~Y. Weintraub} (2019): \enquote{Mean field equilibrium:
  uniqueness, existence, and comparative statics,} \emph{Working paper}.

\bibitem[\protect\citeauthoryear{Lovejoy}{Lovejoy}{1987}]{lovejoy1987ordered}
\textsc{Lovejoy, W.~S.} (1987): \enquote{Ordered solutions for dynamic
  programs,} \emph{Mathematics of Operations Research}, 12, 269--276.

\bibitem[\protect\citeauthoryear{Milgrom and Shannon}{Milgrom and
  Shannon}{1994}]{milgrom1994monotone}
\textsc{Milgrom, P. and C.~Shannon} (1994): \enquote{Monotone comparative
  statics,} \emph{Econometrica}, 157--180.

\bibitem[\protect\citeauthoryear{Mirman, Morand, and Reffett}{Mirman
  et~al.}{2008}]{mirman2008qualitative}
\textsc{Mirman, L.~J., O.~F. Morand, and K.~L. Reffett} (2008): \enquote{A
  qualitative approach to Markovian equilibrium in infinite horizon economies
  with capital,} \emph{Journal of Economic Theory}, 139, 75--98.

\bibitem[\protect\citeauthoryear{M{\"u}ller}{M{\"u}ller}{1997}]{muller1997does}
\textsc{M{\"u}ller, A.} (1997): \enquote{How does the value function of a
  Markov decision process depend on the transition probabilities?}
  \emph{Mathematics of Operations Research}, 22, 872--885.

\bibitem[\protect\citeauthoryear{M{\"u}ller and Scarsini}{M{\"u}ller and
  Scarsini}{2006}]{muller2006stochastic}
\textsc{M{\"u}ller, A. and M.~Scarsini} (2006): \enquote{Stochastic order
  relations and lattices of probability measures,} \emph{SIAM Journal on
  Optimization}, 16, 1024--1043.

\bibitem[\protect\citeauthoryear{M{\"u}ller and Stoyan}{M{\"u}ller and
  Stoyan}{2002}]{muller2002comparison}
\textsc{M{\"u}ller, A. and D.~Stoyan} (2002): \emph{Comparison methods for
  stochastic models and risks}, vol. 389, Wiley New York.

\bibitem[\protect\citeauthoryear{Nocetti}{Nocetti}{2015}]{nocetti2015robust}
\textsc{Nocetti, D.~C.} (2015): \enquote{Robust comparative statics of risk
  changes,} \emph{Management Science}, 62, 1381--1392.

\bibitem[\protect\citeauthoryear{Popescu and Wu}{Popescu and
  Wu}{2007}]{popescu2007dynamic}
\textsc{Popescu, I. and Y.~Wu} (2007): \enquote{Dynamic pricing strategies with
  reference effects,} \emph{Operations Research}, 55, 413--429.

\bibitem[\protect\citeauthoryear{Puterman}{Puterman}{2014}]{puterman2014markov}
\textsc{Puterman, M.~L.} (2014): \emph{Markov decision processes: discrete
  stochastic dynamic programming}, John Wiley \& Sons.

\bibitem[\protect\citeauthoryear{Quah}{Quah}{2007}]{quah2007comparative}
\textsc{Quah, J. K.-H.} (2007): \enquote{The comparative statics of constrained
  optimization problems,} \emph{Econometrica}, 75, 401--431.

\bibitem[\protect\citeauthoryear{Quah and Strulovici}{Quah and
  Strulovici}{2009}]{quah2009comparative}
\textsc{Quah, J. K.-H. and B.~Strulovici} (2009): \enquote{Comparative statics,
  informativeness, and the interval dominance order,} \emph{Econometrica}, 77,
  1949--1992.

\bibitem[\protect\citeauthoryear{Serfozo}{Serfozo}{1981}]{serfozo1981optimal}
\textsc{Serfozo, R.} (1981): \enquote{Optimal control of random walks, birth
  and death processes, and queues,} \emph{Advances in Applied Probability}, 13,
  61--83.

\bibitem[\protect\citeauthoryear{Serfozo}{Serfozo}{1976}]{serfozo1976monotone}
\textsc{Serfozo, R.~F.} (1976): \enquote{Monotone optimal policies for Markov
  decision processes,} in \emph{Stochastic Systems: Modeling, Identification
  and Optimization, II}, Springer, 202--215.

\bibitem[\protect\citeauthoryear{Shaked and Shanthikumar}{Shaked and
  Shanthikumar}{2007}]{shaked2007stochastic}
\textsc{Shaked, M. and J.~G. Shanthikumar} (2007): \emph{Stochastic orders},
  Springer Science \& Business Media.

\bibitem[\protect\citeauthoryear{Shirai}{Shirai}{2013}]{shirai2013welfare}
\textsc{Shirai, K.} (2013): \enquote{Welfare variations and the comparative
  statics of demand,} \emph{Economic Theory}, 53, 315--333.

\bibitem[\protect\citeauthoryear{Smith and McCardle}{Smith and
  McCardle}{2002}]{smith2002structural}
\textsc{Smith, J.~E. and K.~F. McCardle} (2002): \enquote{Structural properties
  of stochastic dynamic programs,} \emph{Operations Research}, 50, 796--809.

\bibitem[\protect\citeauthoryear{Smith and Ulu}{Smith and
  Ulu}{2017}]{smith2017risk}
\textsc{Smith, J.~E. and C.~Ulu} (2017): \enquote{Risk aversion, information
  acquisition, and technology adoption,} \emph{Operations Research}, 65,
  1011--1028.

\bibitem[\protect\citeauthoryear{Stokey and Lucas}{Stokey and
  Lucas}{1989}]{stokey1989}
\textsc{Stokey, N. and R.~Lucas} (1989): \emph{Recursive methods in economic
  dynamics}, Harvard University Press.

\bibitem[\protect\citeauthoryear{Topkis}{Topkis}{1978}]{topkis1978minimizing}
\textsc{Topkis, D.~M.} (1978): \enquote{Minimizing a submodular function on a
  lattice,} \emph{Operations research}, 26, 305--321.

\bibitem[\protect\citeauthoryear{Topkis}{Topkis}{2011}]{topkis2011supermodularity}
---\hspace{-.1pt}---\hspace{-.1pt}--- (2011): \emph{Supermodularity and
  Complementarity}, Princeton university press.

\bibitem[\protect\citeauthoryear{Wang and Li}{Wang and
  Li}{2015}]{wang2015precautionary}
\textsc{Wang, J. and J.~Li} (2015): \enquote{Precautionary effort: another
  trait for prudence,} \emph{Journal of Risk and Insurance}, 82, 977--983.

\end{thebibliography}

\end{document}